\documentclass[a4paper, 12pt]{amsart}
\pagestyle{myheadings}
\usepackage{amsmath, amsfonts, amssymb, amsthm, graphics, graphicx}
\usepackage{amsaddr}

\usepackage{color}

\usepackage[below]{placeins}

\newtheorem{thm}{Theorem}[section]

\newtheorem{lemma}[thm]{Lemma}
\newtheorem{cor}[thm]{Corollary}
\newtheorem{iremark}[thm]{Remark}
\newtheorem{iexample}[thm]{Example}

\newenvironment{proof*}
{\begin{proof}}
{\renewcommand{\qed}{}\end{proof}}

\newcounter{temp}
\setcounter{temp}{1}
\newenvironment{citethm}[1]
{
 \addtocounter{temp}{1}
 \newtheorem*{\alph{temp}theorem}{Theorem #1}
 \begin{\alph{temp}theorem}
}
{\end{\alph{temp}theorem}}



\newenvironment{example}
{ \begin{iexample} \normalfont}
{ \end{iexample}}
%


\setcounter{totalnumber}{1}




\providecommand{\aut}{\mathop{\rm Aut \,}\nolimits}
\providecommand{\sym}{\mathop{\rm Sym \,}\nolimits}

\renewcommand{\\}{\vspace{3mm}}

\title{\bf Rough ends of infinite primitive groups}
\author{\bf Simon M. Smith}
\email{simon.smith@chch.oxon.org, smsmit13@syr.edu}
\address{
   Department of Mathematics, Syracuse University \\
   Syracuse, New York, USA}
\date{\today}
\begin{document}
\maketitle

\begin{abstract}
If $G$ is a group of permutations of a set $\Omega$, then the suborbits of $G$ are the orbits of point-stabilisers $G_\alpha$ acting on $\Omega$. The cardinalities of these suborbits are the subdegrees of $G$. Every infinite primitive permutation group $G$ with finite subdegrees acts faithfully as a group of automorphisms of a locally-finite connected vertex-primitive directed graph $\Gamma$ with vertex set $\Omega$, and there is consequently a natural action of $G$ on the ends of $\Gamma$.

We show that if $G$ is closed in the permutation topology of pointwise convergence, then the structure of $G$ is determined by the length of any orbit of $G$ acting on the ends of $\Gamma$.

Examining the ends of a Cayley graph of a finitely generated group to determine the structure of the group is often fruitful. B. Kr{\"o}n and R. G. M{\"o}ller have recently generalised the Cayley graph to
 what they call a {\it rough Cayley graph}, and they call the ends of this graph the {\it rough ends} of the group.

It transpires that the ends of $\Gamma$ are the rough ends of $G$, and so our result is equivalent to saying that the structure of a closed primitive group $G$ whose subdegrees are all finite is determined by the length of any orbit of $G$ on its rough ends.
\end{abstract}

\newpage
\section{Introduction}
\label{section:introduction}

A transitive group of permutations $G$ of a set $\Omega$ is called {\it primitive} if the only $G$-invariant equivalence relations on $\Omega$ are the universal and trivial relations.

Often, to say something useful about an infinite structure, one must first impose some finiteness condition upon it. In this paper we concern ourselves with infinite primitive groups whose point stabilisers have only orbits of finite length in $\Omega$. These orbits are called the {\it suborbits} of a group, and the cardinalities of these orbits are called the {\it subdegrees}. If the subdegrees of a group are all finite, the group is called {\it subdegree finite}. Thus, in this paper we examine infinite subdegree finite groups; such groups have been the subject of much research (see \cite{adeleke&neumann}, \cite{praeger} and \cite{me:SubdegreeGrowth} for example).

Associated with any permutation group $G$ there are directed graphs, called the {\it orbital digraphs} of $G$, which represent $G$ and its action on $\Omega$ in a very natural way. If $\alpha, \beta \in \Omega$ the set $(\alpha, \beta)^G = \{(\alpha^g, \beta^g) \mid g \in G\}$
is called an {\it orbital} of $G$. It is {\it diagonal} if $\alpha = \beta$. An {\it orbital digraph} of $G$ is a digraph whose vertex set is $\Omega$ and whose set of directed edges (or {\it arcs} as they are usually called) is an orbital of $G$.
Orbital digraphs of subdegree finite primitive groups are {\em locally finite}; that is, every vertex is adjacent to only finitely many vertices. Our digraphs are without loops and multiple edges, and unless otherwise stated the terms {\it path} and {\it distance} are used instead of the cumbersome {\it undirected path} and {\it undirected distance}. A digraph $\Gamma$ is {\it connected} if there is a path in $\Gamma$ between any two distinct vertices. A connected component of $\Gamma$ is a maximally connected subdigraph. Because the connected components of a digraph $\Gamma$ induce an equivalence relation on the set of vertices of $\Gamma$, an equivalence relation that is invariant under automorphisms of $\Gamma$, the non-diagonal orbital digraphs of a primitive group $G$ are connected.

For locally finite graphs and digraphs, an {\it end} is an equivalence class of {\it rays} (one-way infinite cycle-free paths), where two rays are equivalent if and only if they have infinitely many vertices in common with a third ray. Intuitively, one may think of them as the ``points of infinity'' of a graph or digraph. An end is {\it thin} if any pairwise disjoint set of rays in it is finite; otherwise it is {\it thick}. It was shown in \cite{me:OrbGraphs} that the ends of any two non-diagonal orbital digraphs of the same subdegree finite primitive group are essentially the same (they are homeomorphic as topological spaces). This is a special case of a deep property of compactly generated totally disconnected locally compact groups, detailed in \cite{kron_moller_analogues_of_Cayley_graphs}.

Our main result, presented in Section~\ref{section:orbits_on_pends}, shows that the structure of an infinite subdegree finite primitive group is determined by the length of any end orbit.

\begin{citethm}{\ref{thm:action_on_ends}} Let $G$ be a subdegree finite primitive group of permutations of an infinite set $\Omega$ that is closed in the natural complete topology of $\sym(\Omega)$. If $\epsilon$ is an end of any connected orbital digraph of $G$ then precisely one of the following holds,
\begin{enumerate}
\item $|\epsilon^G| = 1$ and $G = G_\epsilon$;
\item $|\epsilon^G| = \aleph_0$ and $G \cong G_{\alpha} \ast_{G_{\alpha, \epsilon}} G_\epsilon$;
\item $|\epsilon^G| = 2^{\aleph_0}$ and $G \cong G_{\alpha} \ast_{H_\alpha} H$, for some group $H$ satisfying $G_{\alpha, \epsilon} < H < G$.
\end{enumerate} \end{citethm}

In \cite{kron_moller_analogues_of_Cayley_graphs}  B. Kr{\"o}n and R. G. M{\"o}ller extend the notion of a Cayley graph of a finitely generated group to compactly generated totally disconnected locally compact groups, and they call these graphs {\em rough Cayley graphs}. The ends of any two Cayley graphs of a finitely generated group are the same, and this is also true of rough Cayley graphs. A discussion of their impressive paper is beyond the scope of this short note; it suffices to observe that an infinite primitive permutation group $G$ whose subdegrees are all finite has a rough Cayley graph. In fact, any non-diagonal orbital digraph of $G$ is rough Cayley graph of $G$. Consequently, the ends of any non-diagonal orbital digraph of $G$ are the rough ends of $G$. In Section~\ref{section:rough_ends} we briefly discuss rough Cayley graphs and their ends, and restate our theorem in terms of a primitive group and its action on its rough ends.

\section{Preliminaries}

It is well-known (see for example \cite[Remark 29.8]{adeleke&neumann}) that if $G$ is a subdegree finite primitive group of permutations of an infinite set $\Omega$, then $\Omega$ is countably infinite. And since $\Omega$ is countable, there is a natural complete topology on the symmetric group $\sym (\Omega)$, that of {\em pointwise convergence}. If we enumerate the set $\Omega$ as $\{\gamma_1, \gamma_2, \gamma_3, \ldots\}$, then a sequence of permutations $(g_n)$ tends to the limit $g$ if and only if, for any $k \geq 1$, we have $\gamma_k^{g_n} = \gamma_k^g$ and $\gamma_k^{g_n^{-1}} = \gamma_k^{g^{-1}}$ for all sufficiently large $n$. A basis for the open sets in this topology consists of all cosets of pointwise stabilisers of finite sets. If $G \leq \sym (\Omega)$, the {\em closure of $G$} is the intersection of all closed subgroups of $\sym (\Omega)$ that contain $G$, and is denoted by $\overline{G}$.

When examining actions on finite subsets of $\Omega$, there is no difference between a group $G$ and its closure $\overline{G}$.

\begin{thm} {\normalfont(\cite[Proposition 2.6]{cameron:oligomorphic})} If $G \leq \sym (\Omega)$ then $G$ and its closure $\overline{G}$ have the same orbits on $n$-tuples of $\Omega$ for all $n \geq 1$. \qed \end{thm}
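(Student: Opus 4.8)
The plan is to prove the two inclusions of orbit-sets separately, with all the substance lying in one of them. Since $G \leq \overline{G}$, every orbit of $G$ on the set $\Omega^n$ of $n$-tuples is contained in an orbit of $\overline{G}$, so it suffices to show that passing to $\overline{G}$ creates no new identifications: if $\bar\alpha = (\alpha_1, \dots, \alpha_n)$ and $\bar\beta = (\beta_1, \dots, \beta_n)$ lie in the same $\overline{G}$-orbit, then they already lie in the same $G$-orbit. A preliminary point I would record is that the closure $\overline{G}$ as defined above — the intersection of all closed subgroups of $\sym(\Omega)$ containing $G$ — coincides with the topological closure of $G$ in $\sym(\Omega)$. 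Indeed, since $\sym(\Omega)$ is a topological group, the topological closure of the subgroup $G$ is again a subgroup, hence a closed subgroup containing $G$; and conversely $\overline{G}$, being an intersection of closed sets, is itself a closed set containing $G$. The two therefore agree, and in particular every open neighbourhood of a point of $\overline{G}$ meets $G$.

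Now suppose $h \in \overline{G}$ satisfies $\bar\alpha^h = \bar\beta$, that is, $\alpha_i^h = \beta_i$ for $1 \leq i \leq n$. Put $F = \{\alpha_1, \dots, \alpha_n\}$, a finite subset of $\Omega$, and let $\sym(\Omega)_{(F)}$ denote its pointwise stabiliser. I would then verify the elementary identity
\[
\sym(\Omega)_{(F)}\, h \;=\; \{\, g \in \sym(\Omega) : \bar\alpha^g = \bar\beta \,\},
\]
which holds because any $uh$ with $u$ fixing $F$ pointwise sends each $\alpha_i$ to $\alpha_i^h = \beta_i$, while conversely any $g$ with $\alpha_i^g = \beta_i$ for all $i$ satisfies $gh^{-1} \in \sym(\Omega)_{(F)}$. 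The left-hand side is a coset of the pointwise stabiliser of a finite set, hence a basic open set, and it contains $h$; by the neighbourhood property noted above it therefore meets $G$. Any $g \in G$ lying in this coset satisfies $\bar\alpha^g = \bar\beta$, which is exactly what is required. This completes the argument, and the corresponding statement for orbits on $n$-element subsets of $\Omega$ follows at once.

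There is no genuine obstacle here: the entire proof reduces to the single observation that membership of $h$ in the closure forces some element of $G$ to agree with $h$ on any prescribed finite subset of $\Omega$, together with the remark that agreeing with $h$ on $\{\alpha_1, \dots, \alpha_n\}$ is precisely the condition needed to carry $\bar\alpha$ to $\bar\beta$. The only points demanding any care are the description of the basic open sets as cosets of finitary pointwise stabilisers — which is already recorded in the excerpt — and the identification of the two notions of closure in play; this is why the result can be, and customarily is, quoted without proof.
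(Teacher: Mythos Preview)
Your argument is correct and is exactly the standard proof of this well-known fact. Note, however, that the paper itself does not prove this statement at all: it is merely quoted from \cite[Proposition 2.6]{cameron:oligomorphic} with a terminal \qed, so there is no paper-proof to compare against.
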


The following result due to D. M. Evans in \cite{evans:a_note_on_automorphisms} will be fundamental to our argument; here we give the statement of this result as given in \cite{cameron:oligomorphic}.

\begin{thm} {\normalfont(\cite[Theorem 2.8]{cameron:oligomorphic})} \label{thm:evans_oligomorphic} If $G$ and $H$ are closed subgroups of $\sym(\Omega)$, where $\Omega$ is a countable set and $H \leq G$, then either $|G:H| \leq \aleph_0$ or $|G:H| = 2^{\aleph_0}$. The former holds if and only if $H$ contains the pointwise stabiliser in $G$ of some finite subset of $\Omega$. \qed \end{thm}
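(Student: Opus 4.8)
The plan is to recognise the statement as a descriptive-set-theoretic gap theorem and to reduce it to the perfect set property of Polish spaces. Since $\Omega$ is countable, $\sym(\Omega)$ is a Polish group under the topology of pointwise convergence, and a closed subgroup of a Polish group is again Polish; thus both $G$ and $H$ are Polish, with $H$ closed in $G$. The index $|G:H|$ is merely the cardinality of the coset space, so everything should be readable from the topology of $G$ together with the position of $H$ inside it. I first record the crude bound $|G:H| \le |G| \le |\sym(\Omega)| = \aleph_0^{\aleph_0} = 2^{\aleph_0}$, so that the content of the theorem is a \emph{gap}: no index strictly between $\aleph_0$ and $2^{\aleph_0}$ can occur.

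Next I would establish the characterisation of the ``former'' case, namely that $|G:H| \le \aleph_0$ if and only if $H$ contains the pointwise stabiliser $G_{(F)}$ in $G$ of some finite $F \subseteq \Omega$; since the subgroups $G_{(F)}$ form a neighbourhood basis at the identity, this is equivalent to $H$ being open in $G$. If $H$ is open then its cosets are open and pairwise disjoint, and as $G$ is second countable a disjoint family of non-empty open sets is countable, giving $|G:H| \le \aleph_0$. Conversely, if $|G:H| \le \aleph_0$ then $G = \bigcup_i H g_i$ is a countable union of closed sets (translates of the closed subgroup $H$); as $G$ is a Baire space, some coset must be non-meagre, and being closed it then has non-empty interior. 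Translating back, $H$ itself has non-empty interior, and a subgroup with non-empty interior is open. This settles the ``if and only if'' clause entirely.

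It then remains to show that if $H$ is \emph{not} open then $|G:H| = 2^{\aleph_0}$. The clean route is to use that the coset space $G/H$ of a Polish group by a closed subgroup is again a Polish space, and to invoke the perfect set theorem: an uncountable Polish space contains a homeomorphic copy of the Cantor set and hence has cardinality $2^{\aleph_0}$. By the previous paragraph, $H$ non-open forces $|G:H|$ to be uncountable, whence $G/H$ is an uncountable Polish space and the dichotomy follows.

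The step I expect to carry the real weight is the passage from ``non-open'' to a \emph{perfect set} of distinct cosets; phrased abstractly it is the assertion that $G/H$ is Polish, but a permutation-group reader may prefer to see the Cantor set built by hand, which is the approach I would make self-contained. For that one fixes an enumeration $\Omega = \{\gamma_1, \gamma_2, \ldots\}$, writes $G_n$ for the open pointwise stabiliser of $\{\gamma_1, \ldots, \gamma_n\}$, and uses non-openness (so that $G_n \not\leq H$ for every $n$) to choose, for each $n$, an element of $G_n \setminus H$. One then grows a tree $(g_s)_{s \in 2^{<\omega}}$ of elements of $G$ so that along each branch the data (and their inverses) agree on ever-larger initial segments of $\Omega$, forcing convergence to a limit $g_x \in G$ since $G$ is closed, while at each splitting node the two children are separated by an element of some $G_n \setminus H$. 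The delicate point, to be handled with care rather than by routine bookkeeping, is ensuring that the separations \emph{survive in the limit}, that is, that $x \ne y$ in $2^\omega$ yields $g_x H \ne g_y H$; this is secured by choosing each separating element inside a basic open set contained in the \emph{open} complement $G \setminus H$, so that the inequality $g_x^{-1} g_y \notin H$ becomes an open condition already decided at a finite stage. Either way one produces $2^{\aleph_0}$ pairwise distinct cosets, completing the proof.
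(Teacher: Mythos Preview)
The paper does not prove this statement at all: it is quoted verbatim as \cite[Theorem~2.8]{cameron:oligomorphic} (originally due to Evans) and marked with a terminal \qedsymbol, so there is no in-paper argument to compare against. Your proposal therefore goes well beyond what the paper does.

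That said, your outline is the standard and correct proof. The identification of ``$H$ contains some $G_{(F)}$'' with ``$H$ is open in $G$'' is immediate from the definition of the permutation topology; the Baire-category step (countable index $\Rightarrow$ some closed coset is non-meagre $\Rightarrow$ that coset, hence $H$, has non-empty interior $\Rightarrow$ $H$ is open) is exactly how Evans argues. For the gap itself, either of your two routes works: invoking that $G/H$ is Polish and applying the perfect set theorem is clean, while the hand-built Cantor tree is closer to Evans's original presentation and to Cameron's exposition. Your identification of the delicate point---arranging that the coset separations at finite stages persist in the limit by making $g_s^{-1}g_t \notin H$ an \emph{open} condition---is precisely the care that the construction requires. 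Nothing is missing.
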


We will denote the stabiliser of any point $\alpha \in \Omega$ in $G$ by $G_\alpha$, and the setwise and pointwise stabilisers of a subset $\Lambda \subseteq \Omega$ by $G_{\{\Lambda\}}$ and $G_{(\Lambda)}$ respectively. This notation will be extended to ends, with $G_\epsilon$ used to denote the stabiliser in $G$ of the end $\epsilon$.

It was shown in \cite[Theorem 3.11]{me:OrbGraphs} that if $G$ is an infinite subdegree finite primitive group with an orbital digraph with more than one end, then there exists a canonical orbital digraph $\Gamma$ of $G$ that has a very simple, tree-like structure.

Like a tree, every vertex in $\Gamma$ is a {\em cut vertex}; that is, removing any vertex from $\Gamma$ results in a disconnected digraph (such digraphs are said to have {\em connectivity one}). But, unlike a tree,  $\Gamma$ contains non-trivial {\em lobes}: subdigraphs $\Lambda$ that are maximal subject to the condition that no vertex in $\Lambda$ is a cut vertex of $\Lambda$. The lobes of $\Gamma$ are all pairwise isomorphic, have at least three vertices and at most one end. Each lobe of $\Gamma$ is itself a primitive digraph, and any two vertices of $\Gamma$ lie in the same number $m$ of lobes. It was shown in \cite[Theorem 4.5]{me:OrbGraphs} that this canonical orbital digraph of $G$ is essentially unique, and as such can be denoted by $\Gamma(m, \Lambda)$.

Intuitively, one may think of $\Gamma(m, \Lambda)$ as the digraph obtained by glueing together infinitely many copies of $\Lambda$ such that no two copies share more than one vertex, and every vertex of $\Gamma(m, \Lambda)$ lies in $m$ copies of $\Lambda$.

In \cite{jung:watkins} H. Jung and M. Watkins characterise primitive graphs with connectivity one; this was extended to primitive digraphs with connectivity one in \cite{me:InfPrimDigraphs}. These graphs and digraphs all share this tree-like structure.

\begin{thm} {\normalfont(\cite[Theorem 3.3 \& Theorem 3.5]{me:InfPrimDigraphs})}
\label{thm:directed_connectivity_one_digraphs} If $\Gamma$ is a
vertex-transitive digraph with connectivity one, then it is
primitive if and only if the lobes of $\Gamma$ are primitive, pairwise isomorphic digraphs that are not directed cycles of odd prime length, and each lobe has at
least three vertices. \end{thm}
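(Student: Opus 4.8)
The plan is to work throughout with the block--cut decomposition of a vertex-transitive digraph $\Gamma$ of connectivity one: its lobes, the block--cut tree $T$ on which $\aut(\Gamma)$ acts, and the common number $m\ge 2$ of lobes through each vertex. I shall use two facts from the structure theory: that $\aut(\Gamma)_{\{\Lambda\}}$ induces the full group $\aut(\Lambda)$ on each lobe $\Lambda$ --- extend an automorphism of $\Lambda$ over $\Gamma$ by carrying the branch hanging from each vertex of $\Lambda$ isomorphically onto the branch hanging from its image --- and that $\aut(\Gamma)$ is transitive on $V(\Gamma)$ and on vertex--lobe incidences. The hypothesis that the lobes are primitive digraphs that are not directed cycles of odd prime length enters only in one way: each lobe $\Lambda$ then has non-trivial point stabilisers, hence, by maximality of point stabilisers in a primitive group, no suborbit of size one other than the trivial one (a primitive group with trivial point stabiliser is regular of prime order, and its orbital digraphs are directed cycles).

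For necessity I argue by contraposition, taking the pairwise isomorphism of the lobes from the structural description. If some lobe had only two vertices, the underlying undirected graph of $\Gamma$ would be a vertex-transitive tree, hence bipartite, and ``lying at even distance'' would be a proper non-trivial $\aut(\Gamma)$-invariant equivalence relation. If some lobe $\Lambda$ were a directed cycle of prime length $p$, so that $\aut(\Lambda)=\mathbb{Z}/p$ acts regularly, then fixing an isomorphism of one lobe onto $\Lambda$ and extending it outward along $T$ --- the extension being forced at every step, since a regular group is pinned down by the image of a single vertex --- yields a digraph homomorphism $\phi\colon\Gamma\to\Lambda$ restricting to an isomorphism on every lobe; as every automorphism of $\Gamma$ merely post-composes $\phi$ with an element of $\aut(\Lambda)$ it permutes the fibres of $\phi$, so the partition of $V(\Gamma)$ into these fibres is a proper non-trivial $\aut(\Gamma)$-invariant equivalence relation (concretely, $\phi$ is ``signed arc-count modulo $p$''). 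Finally, if some lobe $\Lambda$ carried a proper non-trivial $\aut(\Lambda)$-invariant equivalence relation $\sigma$, I transport $\sigma$ canonically to every lobe of $\Gamma$ and pass to the transitive closure of ``lying in a common lobe in a common $\sigma$-class''; the only fiddly point is to verify this $\aut(\Gamma)$-invariant relation is not universal, which one does by tracking how such a class propagates along $T$ and using that $\sigma$ has a proper quotient.

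For sufficiency, assume the lobes are primitive, pairwise isomorphic, have at least three vertices, and are not directed cycles of odd prime length, and suppose for contradiction that $\rho$ is a proper non-trivial $\aut(\Gamma)$-invariant equivalence relation; let $B$ be the $\rho$-class of $v$. If $B$ meets some lobe $\Lambda_0$ in at least two vertices, then $B\cap\Lambda_0$ is a block of imprimitivity for the primitive action of $\aut(\Gamma)_{\{\Lambda_0\}}$ on $\Lambda_0$, forcing $B\supseteq V(\Lambda_0)$; once $B$ contains a whole lobe it contains each cut vertex $c$ of that lobe, and automorphisms fixing $c$ and that lobe but permuting the branches at $c$, or acting within the adjacent lobes, force $B$ to absorb those lobes, whence $B=V(\Gamma)$ by connectivity --- contradicting properness. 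So $B$ meets every lobe in at most one vertex, and this is the decisive case. Here one follows a geodesic in $T$ from $v$ to another point of $B$, enters the first lobe $\Lambda_1$ at $v$ and leaves it at a cut vertex $c_1\ne v$; since $\Lambda_1$ has no non-trivial singleton suborbit, the stabiliser of $v$ in $\aut(\Lambda_1)$ moves $c_1$, so its elements, extended over $\Gamma$, fix $v$ (hence preserve $B$) while spreading $B$ over the branches attached to a whole suborbit of $\Lambda_1$ at $v$; iterating this and exploiting the interchangeability of the $m\ge 2$ lobes at a vertex, one shows no such $B$ can be globally consistent unless the lobes carry a homomorphism onto a single lobe as in the necessity argument --- that is, unless a lobe is a directed cycle of prime length, which is excluded.

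The step I expect to be the real obstacle is exactly this last case --- blocks meeting every lobe in at most one vertex. It is the sole place where the exclusion of directed cycles of odd prime length is used (for such a lobe the ``arc-count modulo $p$'' map of the necessity argument produces precisely such a block system), and the difficulty is that when the lobe's automorphism group is regular one cannot move a vertex inside a lobe while fixing the complement, so the contradiction must be squeezed out globally, from the interplay between the $m$ branches at a vertex and the non-triviality of the lobes' point stabilisers.
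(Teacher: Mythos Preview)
The paper does not supply its own proof of this statement; it is quoted verbatim from \cite{me:InfPrimDigraphs} (Theorems~3.3 and~3.5 there) and used as a black box, so there is nothing in the present paper to compare your proposal against.

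On the substance of your sketch: the necessity direction and the first case of the sufficiency direction (a block meeting some lobe in at least two vertices) are standard and correctly outlined. The genuine gap is precisely where you anticipate it. In the case where a non-trivial block $B$ meets every lobe in at most one vertex, you do not actually carry out the argument; you write that by ``iterating this and exploiting the interchangeability of the $m\ge 2$ lobes at a vertex, one shows no such $B$ can be globally consistent,'' but no such demonstration is given. This is the crux of the proof and the only place where the exclusion of directed prime cycles is genuinely needed, so leaving it as a gesture means the sufficiency direction is not established. To close it you must make precise how the non-triviality of point stabilisers in $\aut(\Lambda)$, combined with the ability to permute the $m$ branches at a cut vertex, forces $B$ to meet some lobe twice after all --- or else to collapse to a single vertex.
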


Thus, any locally finite primitive digraph with connectivity one may be written as $\Gamma(m, \Lambda)$, where $m \geq 2$ and $\Lambda$ is some primitive digraph that is not a directed cycle of odd prime length. In particular, note that such graphs have infinitely many ends.

We associate with any connectivity-one digraph $\Gamma$ a bipartite digraph $T$, whose vertices are coloured violet or blue. The violet vertices are the vertices of $\Gamma$, and the blue vertices are the lobes of $\Gamma$. A violet vertex $v$ is adjacent to a blue vertex $b$ in $T$ if and only if $v$ lies in $b$ in $\Gamma$. Adjacent vertices in $T$ are joined by two arcs, one in each direction. This construction yields a tree, which is known as the {\em block-cut-vertex tree} of $\Gamma$.

Applying the Bass-Serre theory of groups acting on trees (\cite{serre:trees}), one obtains the following.

 \begin{thm}{\normalfont(\cite[Theorem 5.2]{me:OrbGraphs})} \label{thm:G_amalgamated_free_product} If $G$ is an subdegree finite primitive group of permutations with an orbital digraph with more than one end, then $G$ has a canonical orbital digraph
$\Gamma(m, \Lambda)$ and
\[G \cong G_{\alpha} \ast_{G_{\alpha, \{\Lambda\}}} G_{\{\Lambda\}},\]
where $\alpha \in V\Lambda$, and $G_{\alpha, \{\Lambda\}}$ is a
non-trivial maximal proper subgroup of $G_{\{\Lambda\}}$ that
fixes no element in $V\Lambda \setminus \{\alpha\}$. \end{thm}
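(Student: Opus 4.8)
The plan is to realise $G$ as a group acting without inversion on a tree whose quotient is a single segment, and then to read off the amalgamated free product from Bass--Serre theory. By the structure theory recalled above, $G$ has a canonical orbital digraph $\Gamma = \Gamma(m,\Lambda)$ of connectivity one with $m \ge 2$, on which $G$ acts faithfully and transitively on arcs (it being an orbital digraph); moreover each lobe is a primitive digraph with at least three vertices and at most one end, and by Theorem~\ref{thm:directed_connectivity_one_digraphs} no lobe is a directed cycle of odd prime length. I shall use two facts about this action. First, any two lobes meet in at most one vertex, so every edge --- hence every arc --- of $\Gamma$ lies in a unique lobe, while conversely every lobe (being connected with at least three vertices) contains an arc; thus ``the lobe of an arc'' is a surjective $G$-equivariant map from the single $G$-orbit of arcs onto the set of lobes, whence $G$ is transitive on the set of lobes. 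Secondly, by the structure theory $G_{\{\Lambda\}}$ acts primitively, in particular transitively, on $V\Lambda$.

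Now form the block-cut-vertex tree $T$ of $\Gamma$, whose violet vertices are the vertices of $\Gamma$ and whose blue vertices are the lobes. Automorphisms of $\Gamma$ preserve cut vertices and lobes, so $G$ acts on $T$ preserving the bipartition into colours; in particular the action is without inversion. The action of $G$ on the violet vertices is transitive by primitivity, the action on the blue vertices is transitive by the previous paragraph, and combining transitivity on lobes with the transitivity of $G_{\{\Lambda\}}$ on $V\Lambda$ shows $G$ is transitive on the edges of $T$. Since every edge of $T$ joins a violet vertex to a blue one, the quotient $G\backslash T$ is a single segment. Applying the structure theorem for groups acting without inversion on trees (\cite{serre:trees}), with a violet lift $\alpha$ --- which may be taken in our lobe $\Lambda$ --- the adjacent blue lift $\Lambda$, and the edge $\{\alpha,\Lambda\}$ joining them, yields $G \cong G_\alpha \ast_{G_\alpha \cap G_{\{\Lambda\}}} G_{\{\Lambda\}} = G_\alpha \ast_{G_{\alpha,\{\Lambda\}}} G_{\{\Lambda\}}$, the amalgamated subgroup being the stabiliser of that edge.

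It remains to establish the stated properties of $H := G_{\alpha,\{\Lambda\}}$ inside $G_{\{\Lambda\}}$. Since $G_{\{\Lambda\}}$ acts primitively on $V\Lambda$, a set with at least three elements, the point stabiliser $H$ is a maximal proper subgroup of $G_{\{\Lambda\}}$. Suppose $H$ were trivial, or fixed some $\beta \in V\Lambda \setminus \{\alpha\}$; in the latter case $H \le G_{\beta,\{\Lambda\}}$, and since $G_{\beta,\{\Lambda\}} \ne G_{\{\Lambda\}}$ the maximality of $H$ forces $H = G_{\beta,\{\Lambda\}}$. In either case $H$ is normalised by some element of $G_{\{\Lambda\}}$ lying outside $H$, so by maximality $H \trianglelefteq G_{\{\Lambda\}}$; the orbits of $H$ on $V\Lambda$ then form a $G_{\{\Lambda\}}$-invariant partition, and as $H$ fixes $\alpha$ these orbits are all singletons. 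Hence $H$ is the kernel of the action of $G_{\{\Lambda\}}$ on $V\Lambda$, so the induced group $G_{\{\Lambda\}}^{V\Lambda} \cong G_{\{\Lambda\}}/H$ is primitive with trivial point stabiliser, hence regular of prime degree $p \ge 3$; but then $\Lambda$, being a non-diagonal orbital digraph of a regular group of odd prime degree $p$, is a directed cycle of length $p$, contradicting Theorem~\ref{thm:directed_connectivity_one_digraphs}. (The case $V\Lambda$ infinite is likewise excluded, since no infinite group is both regular and primitive.) Therefore $H$ is non-trivial and fixes no vertex of $V\Lambda$ other than $\alpha$.

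The main point requiring care is the verification that $G\backslash T$ really is a single segment --- that is, the three transitivity assertions together with the absence of inversions --- and the bookkeeping needed to identify the vertex and edge groups of the resulting graph of groups. Granting the structure theory of \cite{me:OrbGraphs}, everything else is routine permutation group theory and a direct application of Bass--Serre theory.
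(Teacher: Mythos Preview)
The paper does not give its own proof of this theorem: it is quoted from \cite[Theorem~5.2]{me:OrbGraphs}, with the one-line indication that it follows by ``applying the Bass--Serre theory of groups acting on trees''. Your proposal correctly carries out precisely this programme --- showing that $G$ acts without inversion on the block-cut-vertex tree $T$ with quotient a single edge, reading off the amalgam, and then verifying the stated properties of $G_{\alpha,\{\Lambda\}}$ via the primitivity of $G_{\{\Lambda\}}$ on $V\Lambda$ together with the exclusion of prime directed cycles from Theorem~\ref{thm:directed_connectivity_one_digraphs} --- so there is nothing to compare beyond noting that your argument is the natural and intended one.
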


Of course not every group acting transitively on a digraph of the form $\Gamma(m, \Lambda)$ is primitive, and often one needs a quick test for imprimitivity.

\begin{thm} {\normalfont \cite[Theorem 2.5]{me:InfPrimDigraphs}}
\label{thm:Gya_eq_Gyb_not_prim} Let $G$ be a vertex-transitive group
of automorphisms of a connectivity-one digraph $\Gamma$ whose
lobes have at least three vertices, and let $T$ be the
block-cut-vertex tree of $\Gamma$. If there exist distinct
vertices $\alpha, \beta \in V\Gamma$ such that, for some vertex
$x \in (\alpha, \beta)_T$,
\[ G_{\alpha, x} = G_{\beta, x}, \]
then $G$ does not act primitively on $V\Gamma$. \qed \end{thm}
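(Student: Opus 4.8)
The plan is to prove that $G$ acts imprimitively on $V\Gamma$ --- equivalently, that $G_\alpha$ is not a maximal subgroup of $G$, or that $G$ admits an invariant equivalence relation on $V\Gamma$ that is neither trivial nor universal. I would first record that $V\Gamma$ is infinite: were $\Gamma$ finite, its block-cut-vertex tree would have a leaf, but a leaf is either a vertex of $\Gamma$ lying in a single lobe --- hence not a cut vertex --- or a lobe containing a single cut vertex --- hence containing at least two non-cut-vertices --- and both cases contradict the standing hypotheses that every vertex of $\Gamma$ is a cut vertex and that every lobe has at least three vertices.

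The observation that drives the argument is that $K := G_{\alpha,x} = G_{\beta,x}$ coincides with the pointwise stabiliser in $G$ of the path $(\alpha,\beta)_T$. Indeed $K$ fixes $\alpha$, $\beta$ and $x$, so it stabilises the finite path $(\alpha,\beta)_T$ setwise; the only automorphism of a finite path fixing an endpoint is the identity, so $K$ fixes $(\alpha,\beta)_T$ pointwise; and conversely anything fixing $(\alpha,\beta)_T$ pointwise fixes $\alpha$ and $x$, hence lies in $G_{\alpha,x}=K$. In particular $K$ fixes $x$ together with the two $T$-neighbours $y$ and $z$ of $x$ that lie on $(\alpha,\beta)_T$, and $y\neq z$ because $x$ is an interior vertex of that path.

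From this I would extract a nontrivial block system. The $T$-neighbours of $x$ are the lobes through $x$ when $x$ is a vertex of $\Gamma$ --- a \emph{finite} set, of size equal to the common number $m\ge 2$ of lobes through a vertex --- and the vertices of the lobe $x$ when $x$ is a lobe; in either case $G_x$ acts on this set, with $y$ and $z$ among its points and $K$ lying in the stabiliser of $y$ while also fixing $z$. This is a genuine obstruction to $G_x$ acting primitively near $x$, which I would then push out along the block-cut-vertex tree: grouping the $T$-neighbours of $x$ according to this obstruction groups the branches of $T$ hanging off $x$, and distributing the resulting partition over all $G$-translates of $x$ produces a $G$-invariant partition of $V\Gamma$ placing $\alpha$ and $\beta$ in one block; I would then check it is neither trivial (some block has two points) nor universal. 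The degenerate configuration $G_\alpha=G_\beta$ is disposed of separately: then $\alpha^{N_G(G_\alpha)}$ is a block containing $\beta$, and it is proper, for otherwise $N_G(G_\alpha)=G$, so $G$ acts regularly on the infinite set $V\Gamma$ and cannot be primitive.

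The hard part will be this last, propagation step. The subgroup $K$ that fixes $y$ and $z$ is in general strictly smaller than the full stabiliser of $y$ in $G_x$, so converting ``$K$ fixes $y$ and $z$'' into bona fide imprimitivity of $G$ on $V\Gamma$ --- rather than a local degeneracy that dissolves once one moves away from $x$ --- and verifying that the block produced is proper, requires real care; this is compounded when the lobe $x$ is infinite, where the finiteness shortcut at $x$ is unavailable. I expect this to be the technical heart of the proof.
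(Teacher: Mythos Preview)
The paper does not prove this statement: it is quoted verbatim from \cite[Theorem~2.5]{me:InfPrimDigraphs} and closed with a \qed, so there is no in-paper argument to compare your proposal against.

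As to the proposal itself, your outline is reasonable in spirit but, as you yourself flag, it stops short of the crux. Observing that $K=G_{\alpha,x}=G_{\beta,x}$ fixes the two $T$-neighbours $y,z$ of $x$ on $(\alpha,\beta)_T$ is correct, but this only says that a \emph{subgroup} of $(G_x)_y$ fixes $z$; it does not by itself give a nontrivial $G_x$-congruence on the neighbours of $x$, and your phrase ``grouping the $T$-neighbours of $x$ according to this obstruction'' has no precise content yet. The actual work in the cited proof is to manufacture a proper subgroup $H$ with $G_\alpha < H < G$ (equivalently, a nontrivial block containing $\alpha$) by exploiting the tree structure of $T$: one compares $G_\alpha$ with the setwise stabiliser of a suitable subtree or half-tree determined by the edge $\{x,y\}$, and uses $G_{\alpha,x}=G_{\beta,x}$ to show this stabiliser strictly contains $G_\alpha$ yet is proper in $G$. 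Your handling of the degenerate case $G_\alpha=G_\beta$ is fine (and the infinitude of $V\Gamma$ is the right thing to note there), but the main line needs the explicit construction of $H$ rather than a hoped-for propagation of a local partition.
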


Associated with each orbital $\Delta = (\alpha, \beta)^G$ is its pair $\Delta^* := (\beta, \alpha)^G$, and associated with each suborbit $\Delta(\alpha) := \{\gamma \mid (\alpha, \gamma) \in \Delta\}$ is its pair $\Delta^*(\alpha) := \{\gamma \mid (\alpha, \gamma) \in \Delta^*\}$. In the orbital digraph $(\Omega, \Delta)$, the suborbit $\Delta(\alpha)$ is the set of all vertices $\gamma$ that are adjacent to $\alpha$ with an arc from $\alpha$ to $\gamma$. Similarly, the suborbit $\Delta^*(\alpha)$ is the set of all vertices $\gamma$ that are adjacent to $\alpha$ with an arc from $\gamma$ to $\alpha$. Thus, the out-valency of $\alpha$ in $(\Omega, \Delta)$ is $|\Delta(\alpha)|$, and the in-valency of $\alpha$ in $(\Omega, \Delta)$ is $|\Delta^*(\alpha)|$.

In a locally finite orbital digraph of an infinite primitive group the in-valency and the out-valency of the digraph must be equal. This is a consequence of a lovely theorem by C. E. Praeger.

\begin{thm} {\normalfont(\cite{praeger})} \label{thm:praeger} Let $\Gamma'$ be an infinite connected vertex- and edge-transitive directed graph with finite but unequal in-valency and out-valency. Then there is an epimorphism $\varphi$ from the vertex set of $\Gamma'$ to the set of integers $\mathbb{Z}$ such that $(\alpha, \beta)$ is an edge of $\Gamma'$ only if $\varphi(\beta) = \varphi(\alpha)+1$. \qed \end{thm}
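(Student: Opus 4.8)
The plan is to identify $G := \aut(\Gamma')$ as a totally disconnected locally compact group and extract $\varphi$ from its modular function. The first move is a reduction: it suffices to produce \emph{any} map $\tilde\varphi \colon V\Gamma' \to \mathbb{Z}$ admitting a fixed non-zero integer $d$ with $\tilde\varphi(\beta) - \tilde\varphi(\alpha) = d$ for every arc $(\alpha,\beta)$. Indeed, fixing a base vertex $\alpha_0$, connectedness of the underlying graph forces $\tilde\varphi(v) - \tilde\varphi(\alpha_0) \in d\mathbb{Z}$ for all $v$ (each step of an undirected path changes $\tilde\varphi$ by $\pm d$), while following a directed ray forwards from $\alpha_0$ (possible since the out-valency is at least $1$, both valencies being nonzero as $\Gamma'$ has an arc and is vertex-transitive) and a directed ray backwards shows the image of $\tilde\varphi$ is exactly $\tilde\varphi(\alpha_0)+d\mathbb{Z}$. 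Then $\varphi := (\tilde\varphi - \tilde\varphi(\alpha_0))/d$ is an epimorphism onto $\mathbb{Z}$ with arc-increment precisely $d/d = 1$, as required.

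Next I would set up the topological group. Since $\Gamma'$ is connected and locally finite, $V\Gamma'$ is countable and $G$, in the topology of pointwise convergence, is a totally disconnected locally compact group in which the vertex stabiliser $U := G_{\alpha_0}$ is a compact open subgroup (it is the inverse limit of the finite groups induced on the balls about $\alpha_0$). Such a group carries a modular function $\Delta \colon G \to \mathbb{Q}^{>0}$, computed on $g \in G$ by the index formula $\Delta(g) = [U : U \cap gUg^{-1}]/[gUg^{-1} : U \cap gUg^{-1}]$, and $\Delta$ is trivial on every compact subgroup, in particular on $U$. Now I use edge-transitivity. Fix an arc $(\alpha_0,\beta)$ and $s \in G$ with $s\alpha_0 = \beta$; then $sUs^{-1} = G_\beta$ and $U \cap G_\beta = G_{\alpha_0,\beta}$, so by orbit--stabiliser the two indices above are $|\beta^{U}|$ and $|\alpha_0^{G_\beta}|$. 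Because $G$ is transitive on arcs, $U$ is transitive on the out-neighbours of $\alpha_0$ and $G_\beta$ is transitive on the in-neighbours of $\beta$; hence these indices are the out-valency $d^{+}$ and the in-valency $d^{-}$. Thus $\Delta(s) = d^{+}/d^{-} \neq 1$, so $G$ is non-unimodular.

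Finally, I would build $\tilde\varphi$. Pick a prime $p$ with $v_p(d^{+}/d^{-}) \neq 0$ (one exists since $d^{+}\neq d^{-}$), where $v_p$ is the $p$-adic valuation, and set $\tilde\varphi(\alpha_0^{g}) := v_p(\Delta(g))$; this is well defined because $\Delta$ is trivial on $U$. If $(\alpha_0^{g}, \alpha_0^{h})$ is an arc, then applying $g^{-1}$ carries it to an arc out of $\alpha_0$, so by the transitivity of $U$ on such arcs one may write $h \in U s u g$ with $u \in U$; hence $\Delta(h) = \Delta(s)\Delta(g)$ and $\tilde\varphi(\alpha_0^{h}) - \tilde\varphi(\alpha_0^{g}) = v_p(d^{+}/d^{-})$, a fixed non-zero constant. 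Together with the first paragraph this gives the theorem. I expect the only real subtleties to be the bookkeeping in the modular-function index formula (and fixing the normalisation of $\Delta$ correctly), and the check that the arc-increment truly does not depend on the chosen arc. There is also a more elementary route — define $\varphi$ by counting the net number of forwards minus backwards arcs along a walk from $\alpha_0$ and prove this is well defined, i.e.\ that every cycle of the underlying graph traverses equally many arcs in each direction — but there the well-definedness (equivalently: absence of an ``unbalanced'' cycle) is precisely the hard point, and closing it seems to require feeding the counting identity $e^{+}(W) - e^{-}(W) = |W|(d^{+}-d^{-})$ for finite vertex sets $W$ into a shortest-counterexample argument; I would pursue the modular-function argument above instead.
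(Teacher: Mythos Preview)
The paper does not prove this statement: it is quoted from Praeger's paper \cite{praeger} and closed immediately with a tombstone. There is therefore no in-paper argument to compare yours against.

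That said, your modular-function argument is correct and complete. A couple of minor remarks: you mix left-action notation ($s\alpha_0=\beta$) with the paper's right-action notation ($\alpha_0^{g}$), which makes the coset bookkeeping in the last paragraph slightly awkward to parse, but the computation $h\in Usug$ and hence $\Delta(h)=\Delta(s)\Delta(g)$ goes through once one fixes a convention. The reduction in your first paragraph is clean and is exactly the right way to pass from ``constant nonzero arc-increment'' to ``arc-increment $1$ and surjective onto $\mathbb{Z}$''.

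For context, Praeger's original 1991 argument is purely combinatorial, closer in spirit to the ``elementary route'' you sketch and then set aside: one shows directly that every closed walk in the underlying graph traverses equally many arcs forwards as backwards, so the net-arc-count from a basepoint is a well-defined level function. The unequal valencies enter through a counting/double-counting argument over finite vertex sets, essentially the identity $e^{+}(W)-e^{-}(W)=|W|(d^{+}-d^{-})$ you wrote down. Your approach replaces that combinatorics with the structure theory of totally disconnected locally compact groups: the inequality $d^{+}\neq d^{-}$ becomes non-unimodularity of $\aut(\Gamma')$, and the level function is read off from the modular character. This is the viewpoint later popularised by Trofimov, Schlichting, and in the graph-theoretic literature by Woess; it is more conceptual and generalises readily, at the cost of importing Haar measure. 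Either route is perfectly acceptable here.
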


\begin{cor} \label{cor:paired_suborbits_have_same_size} Suppose $G'$ is a primitive group of permutations of an infinite set $\Omega$, and every suborbit of $G'$ is finite. If $\alpha \in \Omega$ and $\Delta(\alpha)$ and $\Delta^*(\alpha)$ are paired $\alpha$-suborbits then $|\Delta(\alpha)| = |\Delta^*(\alpha)|$. \qed \end{cor}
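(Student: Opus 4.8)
The plan is to deduce this from Praeger's theorem (Theorem~\ref{thm:praeger}) by the usual argument ruling out the integer-valued map for primitive groups. First I would form the orbital digraph $\Gamma' = (\Omega, \Delta)$, where $\Delta = (\alpha, \beta)^{G'}$ is the orbital whose associated $\alpha$-suborbit is $\Delta(\alpha)$. Since $G'$ is primitive and $\Delta$ is non-diagonal (if $\alpha = \beta$ the claim is trivial), $\Gamma'$ is connected; it is vertex-transitive and edge-transitive because $G'$ acts transitively on $\Omega$ and on $\Delta$; and it is locally finite because all subdegrees are finite. The out-valency of $\Gamma'$ is $|\Delta(\alpha)|$ and the in-valency is $|\Delta^*(\alpha)|$, both finite.

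Now suppose for contradiction that $|\Delta(\alpha)| \neq |\Delta^*(\alpha)|$, so that $\Gamma'$ has unequal in- and out-valency. Then Theorem~\ref{thm:praeger} supplies an epimorphism $\varphi \colon V\Gamma' \to \mathbb{Z}$ with $\varphi(\beta) = \varphi(\alpha) + 1$ whenever $(\alpha, \beta)$ is an arc. The key step is to use $\varphi$ to build a non-trivial proper $G'$-invariant equivalence relation on $\Omega$, contradicting primitivity. The natural candidate is the relation $\sim$ defined by $\gamma \sim \delta$ iff $\varphi(\gamma) \equiv \varphi(\delta) \pmod{2}$ (or modulo any fixed integer $n \geq 2$): its classes are the preimages of the congruence classes of $\mathbb{Z}$, so $\sim$ is non-universal (there are arcs, so both parities are attained and $\varphi$ is non-constant) and non-trivial (each class is infinite, being a union of infinitely many fibres of $\varphi$, or at least clearly not a singleton). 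Invariance under $G'$ is the point needing care: for $g \in G'$ and an arc $(\gamma, \delta)$, the pair $(\gamma^g, \delta^g)$ is again an arc, so $\varphi(\delta^g) = \varphi(\gamma^g) + 1$; since $\Gamma'$ is connected, following a path from a fixed basepoint shows that $\varphi \circ g - \varphi$ is constant on $V\Gamma'$, hence $g$ shifts all $\varphi$-values by the same integer and therefore preserves congruence classes modulo any $n$. Thus $\sim$ is a $G'$-congruence that is neither trivial nor universal, contradicting the primitivity of $G'$.

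The main obstacle is the invariance argument: one must check that an arbitrary automorphism in $G'$ (not merely one arising from an explicit generating set) shifts $\varphi$ by a globally constant amount, which is exactly where connectedness of the orbital digraph is used — this is the standard ``height function'' rigidity for edge-transitive digraphs and is routine but essential. Once invariance is in hand, the contradiction with primitivity is immediate, so we conclude $|\Delta(\alpha)| = |\Delta^*(\alpha)|$.
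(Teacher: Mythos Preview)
Your argument is correct and is exactly the standard deduction the paper has in mind: the corollary is stated with a \qed and no proof, so the author is treating it as immediate from Theorem~\ref{thm:praeger}, and your contrapositive---using $\varphi$ to produce a nontrivial $G'$-congruence via the constant-shift property of automorphisms on a connected digraph---is the expected way to unpack that. There is nothing to add.
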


\section{Orbits on ends}
\label{section:orbits_on_pends}

Suppose $G$ is an infinite primitive subdegree finite group of permutations with canonical orbital digraph $\Gamma(m, \Lambda)$, and let $T$ be the block-cut-vertex tree of $\Gamma$. Finally, assume that $G$ is closed, and recall that a group and its closure have the same orbits on $n$-tuples of $\Omega$, for all $n\geq 1$.

Using the following three lemmas, it is possible to scrutinise the action of $G$ on the thin ends of any connected orbital digraph of $G$.

\begin{lemma} \label{lemma:Ga_leq_closure Gepsilon_then_Ga_leq_Gepsilon} For any thin end $\epsilon$ of $\Gamma$, if $\underline{\alpha}$ is an $n$-tuple of vertices with $G_{\underline{\alpha}} \leq \overline{G_\epsilon}$, then $G_{\underline{\alpha}} \leq G_\epsilon$. \end{lemma}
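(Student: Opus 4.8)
The plan is to establish the a priori stronger fact that every element of $\overline{G_\epsilon}$ fixes $\epsilon$ --- in other words that the stabiliser of a thin end is closed --- which gives the lemma at once, since $G_{\underline\alpha}\leq\overline{G_\epsilon}\leq\overline{G}=G\leq\aut(\Gamma)$. So I fix an arbitrary $g\in\overline{G_\epsilon}$ (in fact we shall not even use that $g$ fixes $\underline\alpha$) and aim to show $g\epsilon=\epsilon$.

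The first step is to use thinness to pin $\epsilon$ down by finite separators. Invoking the structure theory of thin ends in a locally finite graph, I obtain a descending sequence $C_1\supseteq C_2\supseteq\cdots$ of connected subgraphs of $\Gamma$ with $\bigcap_k C_k=\emptyset$, such that each $C_k$ is a connected component of $\Gamma\setminus S_k$ for a finite set $S_k$, every ray in $\epsilon$ has a tail in every $C_k$, and --- crucially --- $\epsilon$ is the \emph{unique} end of $\Gamma$ with a tail in every $C_k$. Thinness enters precisely in this uniqueness: after passing to a suitable subsequence one may take $S_k$ to consist of the $n_k$th vertices of a fixed family of $d=\deg\epsilon$ pairwise disjoint rays of $\epsilon$ that meet every ray of $\epsilon$ infinitely often; any competing end must then meet $S_k$ for each $k$, and a pigeonhole argument forces one of its rays to share infinitely many vertices with one of these $d$ rays, hence to lie in $\epsilon$. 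One checks this is unaffected by which of the two shapes a thin end of $\Gamma(m,\Lambda)$ takes: if $\epsilon$ threads through infinitely many lobes the $S_k$ may be taken to be single cut vertices, while if $\epsilon$ lives inside one lobe the $S_k$ come from that lobe together with the hanging-off material.

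The second step is a short automorphism chase fuelled by the closure. For each $k$, since $g\in\overline{G_\epsilon}$, choose $h_k\in G_\epsilon$ agreeing with $g$ on the finite set $S_k\cup\{w_k\}$, where $w_k$ is any vertex of $C_k$; thus $h_kS_k=gS_k$ and $h_kw_k=gw_k$. As $h_k$ is an automorphism fixing $\epsilon$, it carries the $\epsilon$-component $C_k$ of $\Gamma\setminus S_k$ onto the $\epsilon$-component of $\Gamma\setminus gS_k$; meanwhile $g$ carries $C_k$ onto a component $gC_k$ of $\Gamma\setminus gS_k$, and $gC_k$ and $h_kC_k$ share the vertex $gw_k=h_kw_k$ and therefore coincide. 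Hence $gC_k$ is exactly the $\epsilon$-component of $\Gamma\setminus gS_k$; in particular $\epsilon$ has a tail in $gC_k$, so $g^{-1}\epsilon$ has a tail in $C_k$. Since this holds for every $k$, the uniqueness clause of the first step forces $g^{-1}\epsilon=\epsilon$, i.e.\ $g\in G_\epsilon$, as required.

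The main obstacle is the first step: producing the capturing sequence $(C_k)$ together with its uniqueness property from the bare definition of a thin end, and checking it survives both shapes of thin end in $\Gamma(m,\Lambda)$. Once $\epsilon$ has been captured in this way the remainder is routine. I would take care that local finiteness of $\Gamma$ is genuinely used --- so that $\Gamma$ minus a finite set has only finitely many components and the thin-end structure theory applies --- and that the separators $S_k$ have uniformly bounded size, which is exactly what prevents a rival end from hiding alongside $\epsilon$.
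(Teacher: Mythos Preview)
Your argument is correct and in fact establishes the stronger statement that $G_\epsilon$ is closed for any thin end $\epsilon$; the lemma then follows immediately. The second step---approximating $g$ on $S_k\cup\{w_k\}$ by some $h_k\in G_\epsilon$ and using that $h_k$ and $g$ must send $C_k$ to the same component of $\Gamma\setminus gS_k$---is clean and works as written. The first step (existence of a defining sequence with the uniqueness property) is standard for thin ends in locally finite graphs, and your sketch via Halin-type separators of size $d=\deg\epsilon$ is the right idea. One small correction: in $\Gamma(m,\Lambda)$ the lobes have at most one end, and that end (when it exists) is thick in $\Gamma$, so every \emph{thin} end threads through infinitely many lobes and your second ``shape'' is vacuous. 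This does not affect correctness.

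The paper takes a different and rather more direct route, exploiting the block-cut-vertex tree $T$ instead of general thin-end machinery. Since $\epsilon$ is thin it corresponds to an end of $T$, and in a tree any automorphism fixing a vertex $\alpha_1$ and an end $\epsilon$ must fix the geodesic ray $[\alpha_1,\epsilon)_T$ pointwise. The paper then argues by contradiction: if $g\in G_{\underline{\alpha}}\setminus G_\epsilon$, the approximants $g_n\in G_\epsilon$ eventually fix $\alpha_1$, hence fix all of $[\alpha_1,\epsilon)_T$; but $g$ moves some vertex $\beta$ on this ray, and for large $n$ the approximant $g_n$ agrees with $g$ at $\beta$, a contradiction. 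This argument genuinely uses the hypothesis $g\in G_{\underline{\alpha}}$ (to anchor the ray at $\alpha_1$), whereas yours does not. Your approach is more portable---it would prove closedness of thin-end stabilisers in any locally finite graph---while the paper's is shorter and avoids invoking the structure theory of thin ends, since the tree $T$ supplies the separating vertices for free.
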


\begin{proof} Enumerate the set $V\Gamma$ as $\{\gamma_1, \gamma_2, \gamma_3, \ldots \}$, and recall that an element $g \in G$ is the limit of a sequence of permutations $(g_n)$ of $V\Gamma$ if and only if, given $k \geq 1$, there exists an integer $N_k$ such that, for all $n \geq N_k$ we have $\gamma_i^{g_n} = \gamma_i^g$ and $\gamma_i^{g_n^{-1}} = \gamma_i^{g^{-1}}$ whenever $1 \leq i \leq k$.

Suppose $G_{\underline{\alpha}} \leq \overline{G_\epsilon}$ but $G_{\underline{\alpha}} \not \leq G_\epsilon$, and fix
$g \in G_{\underline{\alpha}} \setminus G_\epsilon.$
Since $g \in \overline{G_\epsilon}$, there exists a sequence $(g_n)$ of permutations in $G_\epsilon$ such that
$g_n \rightarrow g.$
If $\underline{\alpha} = (\alpha_1, \ldots, \alpha_n)$ then choose $k \geq 1$ such that $\{\alpha_1, \ldots, \alpha_n\} \subseteq \{\gamma_1, \ldots, \gamma_k\}$. Since $g_n \rightarrow g$, there exists an integer $N_k$ such that, for all $n \geq N_k$ we have $\gamma_i^{g_n} = \gamma_i^g$ whenever $1 \leq i \leq k$. Hence, for all $n \geq N_k$,
\[\underline{\alpha}^{g_n} = \underline{\alpha}.\]

Recall $T$ is the block-cut-vertex tree of $\Gamma$, and let $[\alpha_1, \epsilon)_T$ be the unique half-line of $T$ rooted at $\alpha_1$ lying in the end $\epsilon$. For all $n \geq N_k$ we have $g_n \in G_{\alpha_1, \epsilon}$, therefore $g_n$ fixes every vertex on $[\alpha_1, \epsilon)_T$. Since $g$ lies in $G_{\alpha_1}$ but not $G_{\epsilon}$, there exists $x \in [\alpha_1, \epsilon)_T$ such that $x^g \not = x$; furthermore, if $y \in [x, \epsilon)_T$ then $y^g \not = y$. Therefore, we may choose $\beta \in [x, \epsilon)_T \cap V\Gamma$ such that $\beta^g \not = \beta$. Since $\beta \in V\Gamma$, there is an integer $k^\prime$ such that $\beta = \gamma_{k^\prime}$. And because $g_n \rightarrow g$, there exists $N_{k^\prime} \geq 1$ such that for all $n \geq N_{k^\prime}$ we have $\gamma_{k^\prime}^{g_n} = \gamma_{k^\prime}^g$. Fix $N:= \max (N_k, N_{k^\prime})$. For all $n \geq N$, we have $g_n \in G_{\underline{\alpha}}$ but $\beta^{g_n} = \beta^g \not = \beta$, so $g_n \not \in G_\epsilon$, a contradiction. \end{proof}

\begin{lemma} \label{lemma:if_Ga_leq_Gepsilon} If there is an end $\epsilon$ of $\Gamma$ and an $n$-tuple $\underline{\alpha} = (\alpha_1, \ldots, \alpha_n)$ of vertices of $\Gamma$ such that $G_{\underline{\alpha}} \leq G_\epsilon$, then, for $1 \leq i \leq n$, the orbit $\epsilon^{G_{\alpha_i}}$ is finite. \end{lemma}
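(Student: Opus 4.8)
The plan is to use subdegree-finiteness to bound $|G_{\alpha_i} : G_{\underline{\alpha}}|$, and then transfer this bound to the orbit of $\epsilon$ via the orbit--stabiliser theorem; no topology or tree structure is needed here. Recall first that $G_{\underline{\alpha}}$ is the pointwise stabiliser of $\{\alpha_1, \dots, \alpha_n\}$, so $G_{\underline{\alpha}} = \bigcap_{j=1}^{n} G_{\alpha_j}$.

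Fix $i$ with $1 \le i \le n$. Since $G$ is subdegree finite, every orbit of the point stabiliser $G_{\alpha_i}$ on $\Omega = V\Gamma$ is finite; in particular, for each $j$ the suborbit $\alpha_j^{\,G_{\alpha_i}}$ is finite, so by the orbit--stabiliser theorem
\[ |G_{\alpha_i} : G_{\alpha_i} \cap G_{\alpha_j}| = |\alpha_j^{\,G_{\alpha_i}}| < \infty . \]
A finite intersection of finite-index subgroups again has finite index (iterating $|A : B \cap C| \le |A:B|\,|A:C|$), so
\[ G_{\underline{\alpha}} \;=\; G_{\alpha_i} \cap \bigcap_{j \ne i}\bigl(G_{\alpha_i} \cap G_{\alpha_j}\bigr) \]
has finite index in $G_{\alpha_i}$.

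Now by hypothesis $G_{\underline{\alpha}} \le G_\epsilon$, and trivially $G_{\underline{\alpha}} \le G_{\alpha_i}$, whence $G_{\underline{\alpha}} \le G_{\alpha_i} \cap G_\epsilon = G_{\alpha_i, \epsilon}$. Therefore
\[ \bigl|\epsilon^{\,G_{\alpha_i}}\bigr| \;=\; |G_{\alpha_i} : G_{\alpha_i,\epsilon}| \;\le\; |G_{\alpha_i} : G_{\underline{\alpha}}| \;<\; \infty , \]
which is the claim. There is no real obstacle: the single observation driving the argument is that subdegree-finiteness forces $G_{\underline{\alpha}}$ to be a finite-index subgroup of each $G_{\alpha_i}$, and once it is squeezed inside $G_{\alpha_i,\epsilon}$ the finiteness of $\epsilon^{G_{\alpha_i}}$ is immediate.
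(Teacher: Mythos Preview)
Your proof is correct and follows essentially the same approach as the paper: both arguments show that $|G_{\alpha_i}:G_{\underline{\alpha}}|$ is finite using subdegree-finiteness, observe that $G_{\underline{\alpha}} \le G_{\alpha_i,\epsilon}$, and then bound $|\epsilon^{G_{\alpha_i}}| = |G_{\alpha_i}:G_{\alpha_i,\epsilon}|$ by $|G_{\alpha_i}:G_{\underline{\alpha}}|$. The only cosmetic difference is that the paper computes the index exactly via the descending chain $G_{\alpha_1} \ge G_{\alpha_1,\alpha_2} \ge \cdots \ge G_{\underline{\alpha}}$, whereas you bound it using the Poincar\'e-type inequality for intersections of finite-index subgroups; either route is fine.
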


\begin{proof} Suppose $G_{\underline{\alpha}} \leq G_\epsilon$. Then, for all $i$ satisfying $1 \leq i \leq n$ we have $G_{\underline{\alpha}} \leq G_{\alpha_i, \epsilon}$. Therefore
\[|G_{\alpha_i} : G_{\underline{\alpha}}|
    = |G_{\alpha_i} : G_{\alpha_i, \epsilon}| |G_{\alpha_i, \epsilon} :
        G_{\underline{\alpha}}|
    = |\epsilon^{G_{\alpha_i}}| |G_{\alpha_i, \epsilon} :
        G_{\underline{\alpha}}|.\]
Observe
\[|G_{\alpha_1} : G_{\underline{\alpha}}|
    =  \prod_{m=1}^{n-1} |G_{\alpha_{1} \ldots \alpha_{m}} :
        G_{\alpha_{1} \ldots \alpha_{m+1}}|
    =  \prod_{m=1}^{n-1} |\alpha_{m+1}^{G_{\alpha_{1} \ldots
        \alpha_{m}}}|.\]

Since $G$ is subdegree finite, each orbit $|\alpha_{m+1}^{G_{\alpha_{1} \ldots \alpha_{m}}}|$ is finite, so $|G_{\alpha_1} : G_{\underline{\alpha}}|$ is finite. A similar argument shows $|G_{\alpha_i}:G_{\underline{\alpha}}|$ is finite for each $i$ satisfying $1 \leq i \leq n$. We have already seen that
\[|G_{\alpha_i} : G_{\underline{\alpha}}| = |\epsilon^{G_{\alpha_i}}| |G_{\alpha_i, \epsilon} : G_{\underline{\alpha}}|,\]
so one may deduce that $|\epsilon^{G_{\alpha_i}}|$ is finite. \end{proof}

\begin{lemma} \label{lemma:thin_end_in_small_Ga_orbit_for_all_a} If $\epsilon$ is a thin end of $\Gamma$ with $|\epsilon^G| < 2^{\aleph_0}$ then, for all $\alpha \in V\Gamma$, the orbit $\epsilon^{G_{\alpha}}$ is finite. \end{lemma}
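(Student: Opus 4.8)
The plan is to combine the Evans dichotomy (Theorem~\ref{thm:evans_oligomorphic}) with the previous two lemmas. Since $G$ is closed and $G_\epsilon \leq G$, the closure $\overline{G_\epsilon}$ is a closed subgroup of $G$, so by Evans's theorem either $|G:\overline{G_\epsilon}| \leq \aleph_0$ or $|G:\overline{G_\epsilon}| = 2^{\aleph_0}$, with the former holding precisely when $\overline{G_\epsilon}$ contains the pointwise stabiliser in $G$ of some finite subset of $V\Gamma$. Now $|\epsilon^G| = |G:G_\epsilon|$, and since $G$ and $\overline{G_\epsilon}$ are both closed the index $|G:G_\epsilon|$ and $|G:\overline{G_\epsilon}|$ differ by the (possibly infinite) factor $|\overline{G_\epsilon}:G_\epsilon|$; in any case $|G:\overline{G_\epsilon}| \leq |G:G_\epsilon| = |\epsilon^G| < 2^{\aleph_0}$. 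Hence the first alternative of Evans must hold, so $\overline{G_\epsilon}$ contains $G_{\underline{\alpha}}$ for some finite tuple $\underline{\alpha} = (\alpha_1, \ldots, \alpha_n)$ of vertices of $\Gamma$.

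Next I would feed this into Lemma~\ref{lemma:Ga_leq_closure Gepsilon_then_Ga_leq_Gepsilon}: since $\epsilon$ is thin and $G_{\underline{\alpha}} \leq \overline{G_\epsilon}$, that lemma gives $G_{\underline{\alpha}} \leq G_\epsilon$. Then Lemma~\ref{lemma:if_Ga_leq_Gepsilon} applies directly and yields that $\epsilon^{G_{\alpha_i}}$ is finite for each $i$ with $1 \leq i \leq n$. So far this only handles the finitely many vertices appearing in $\underline{\alpha}$; the remaining task is to upgrade ``finite for $\alpha_1, \ldots, \alpha_n$'' to ``finite for every $\alpha \in V\Gamma$''.

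For this last step I would use vertex-transitivity of $G$ on $V\Gamma$ together with a counting argument. Fix any $\alpha \in V\Gamma$; pick $\alpha_1$ from the tuple above, and let $g \in G$ with $\alpha_1^g = \alpha$, so $G_\alpha = G_{\alpha_1}^{\,g}$ and $\epsilon^{G_\alpha}$ has the same cardinality as $(\epsilon^{g^{-1}})^{G_{\alpha_1}}$. This reduces matters to showing $\eta^{G_{\alpha_1}}$ is finite for \emph{every} end $\eta$ in the $G$-orbit of $\epsilon$, not just for $\eta = \epsilon$ itself. Since $|\epsilon^G| < 2^{\aleph_0}$, any $\eta \in \epsilon^G$ satisfies $|\eta^G| = |\epsilon^G| < 2^{\aleph_0}$, and $\eta$ is again a thin end (thinness is preserved by automorphisms); so the argument of the first two paragraphs, applied verbatim to $\eta$ in place of $\epsilon$, produces a finite tuple $\underline{\beta}$ of vertices with $G_{\underline{\beta}} \leq G_\eta$, whence $\eta^{G_{\beta_j}}$ is finite for the vertices $\beta_j$ of that tuple. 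The point is then that one of these $\beta_j$ can be taken to be $\alpha_1$: by vertex-transitivity $\alpha_1$ lies in the $G$-orbit of $\beta_1$, and transporting by an element of $G$ that carries $\beta_1$ to $\alpha_1$ moves $\eta$ to another end $\eta'$ in $\epsilon^G$ with $\alpha_1^{G_{\alpha_1}}$-orbit of $\eta'$ finite --- but that orbit is $(\eta')^{G_{\alpha_1}}$, and running over all $\eta \in \epsilon^G$ exhausts precisely $\{\,(\eta')^{G_{\alpha_1}} : \eta' \in \epsilon^G\,\}$. Thus $\epsilon^{G_{\alpha_1}}$ itself, being one such orbit, is finite, and since $\alpha_1$ was an arbitrary vertex up to the transitivity reduction, $\epsilon^{G_\alpha}$ is finite for all $\alpha$.

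The main obstacle I anticipate is precisely this last bookkeeping: making sure the finite tuple supplied by Evans's theorem for a \emph{given} end can be conjugated so as to contain a prescribed vertex, and that thinness and the bound on orbit length genuinely transfer along the $G$-orbit of $\epsilon$. Both facts are routine once one notes that automorphisms preserve thinness and that $|\eta^G|$ is constant on $\epsilon^G$, but the argument must be phrased carefully to avoid circularity, since we are simultaneously quantifying over ends in $\epsilon^G$ and over vertices of $\Gamma$.
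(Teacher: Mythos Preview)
Your first two paragraphs are correct and match the paper exactly: bound $|G:\overline{G_\epsilon}|$ by $|\epsilon^G|$, apply Evans's theorem to obtain a finite tuple $\underline{\alpha}=(\alpha_1,\ldots,\alpha_n)$ with $G_{\underline{\alpha}}\leq\overline{G_\epsilon}$, then use Lemma~\ref{lemma:Ga_leq_closure Gepsilon_then_Ga_leq_Gepsilon} to drop the closure and Lemma~\ref{lemma:if_Ga_leq_Gepsilon} to get finiteness of $\epsilon^{G_{\alpha_i}}$ for the entries of the tuple.

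The third paragraph, however, is where you diverge from the paper, and the argument as written has a genuine gap. You reduce the question for an arbitrary vertex $\alpha$ to showing that $\eta^{G_{\alpha_1}}$ is finite for \emph{every} $\eta\in\epsilon^G$. For a given such $\eta$ you produce (via Evans again) a tuple $\underline{\beta}$ with $\eta^{G_{\beta_j}}$ finite, and then transport by some $h\in G$ sending $\beta_1$ to $\alpha_1$. But this only yields that $(\eta^h)^{G_{\alpha_1}}$ is finite, not that $\eta^{G_{\alpha_1}}$ is finite. Your assertion that ``running over all $\eta\in\epsilon^G$ exhausts precisely $\{(\eta')^{G_{\alpha_1}}:\eta'\in\epsilon^G\}$'' is the crux, and it is not justified: the assignment $\eta\mapsto\eta^h$ depends on the non-canonical choices of $\underline{\beta}$ and $h$, and there is no reason it should hit every $G_{\alpha_1}$-orbit on $\epsilon^G$. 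You yourself flag this bookkeeping as the main obstacle, and indeed it does not go through as stated.

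The paper sidesteps the whole conjugation detour with a one-line trick that you overlooked. Once $G_{\underline{\alpha}}\leq G_\epsilon$ is established, then for \emph{any} vertex $\alpha\in V\Gamma$ simply enlarge the tuple to $\underline{\alpha}':=(\alpha,\alpha_1,\ldots,\alpha_n)$. Trivially $G_{\underline{\alpha}'}\leq G_{\underline{\alpha}}\leq G_\epsilon$, and now Lemma~\ref{lemma:if_Ga_leq_Gepsilon} applied to $\underline{\alpha}'$ gives directly that $\epsilon^{G_\alpha}$ is finite, since $\alpha$ is one of the entries of $\underline{\alpha}'$. No transitivity, no second invocation of Evans, no passage to other ends in $\epsilon^G$ is needed.
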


\begin{proof} Let $\epsilon$ be a thin end of $\Gamma$, and let $H := \overline{G_\epsilon}$. Now
\[|\epsilon^G| = \ |G:G_\epsilon|
= \ |G:H| |H:G_\epsilon|.\]

If $|\epsilon^G| < 2^{\aleph_0}$, then $|G:H|<2^{\aleph_0}$, and, since $G$ and $H$ are both closed, we may apply Theorem~\ref{thm:evans_oligomorphic} to deduce that there exists an $n$-tuple of vertices $\underline{\alpha}$ such that $G_{\underline{\alpha}} \leq H$, and $|G:H| \leq \aleph_0$.

Suppose $|\epsilon^G| < 2^{\aleph_0}$ and $\alpha = (\alpha_1, \ldots, \alpha_n)$ is an $n$-tuple of vertices of $\Gamma$ such that $G_{\underline{\alpha}} \leq \overline{G_\epsilon}$. By Lemma~\ref{lemma:Ga_leq_closure Gepsilon_then_Ga_leq_Gepsilon} we have $G_{\underline{\alpha}} \leq G_\epsilon.$
Fix $\alpha \in V\Gamma$ and let $\underline{\alpha}^\prime:=(\alpha, \alpha_1, \ldots, \alpha_n)$; then $G_{\underline{\alpha}^\prime} \leq G_{\underline{\alpha}} \leq G_\epsilon$, and so, from Lemma~\ref{lemma:if_Ga_leq_Gepsilon}, the orbit $\epsilon^{G_\alpha}$ is finite. \end{proof}

We now prove our main result.

\begin{thm} \label{thm:action_on_ends}  Let $G$ be a subdegree finite primitive group of permutations of an infinite set $\Omega$ that is closed in the natural complete topology of $\sym(\Omega)$. If $\epsilon$ is an end of any connected orbital digraph of $G$ then precisely one of the following holds,
\begin{enumerate}
\item $|\epsilon^G| = 1$ and $G = G_\epsilon$;
\item $|\epsilon^G| = \aleph_0$ and $G \cong G_{\alpha} \ast_{G_{\alpha, \epsilon}} G_\epsilon$;
\item $|\epsilon^G| = 2^{\aleph_0}$ and $G \cong G_{\alpha} \ast_{H_\alpha} H$, for some group $H$ satisfying $G_{\alpha, \epsilon} < H < G$.
\end{enumerate} \end{thm}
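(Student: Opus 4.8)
The plan is to carry out the whole argument inside the canonical orbital digraph $\Gamma = \Gamma(m,\Lambda)$ and its block-cut-vertex tree $T$. First dispose of the degenerate possibility: if no orbital digraph of $G$ has more than one end, then every connected orbital digraph of $G$ is one-ended, $G$ fixes that unique end, and we are in case (i). So assume henceforth that $G$ has a canonical orbital digraph $\Gamma = \Gamma(m,\Lambda)$ (\cite{me:OrbGraphs}); since the ends of any two connected orbital digraphs of $G$ are essentially the same, it suffices to treat an end $\epsilon$ of this $\Gamma$. I would then isolate two elementary facts. (a) If $G$ is primitive of degree at least two then $\mathrm{Fix}(G_\alpha) = \{\alpha\}$: indeed $G_\alpha$ is a maximal subgroup of $G$, so any $\beta \in \mathrm{Fix}(G_\alpha)$ yields $G_\alpha \leq G_\beta \lneq G$ and hence $G_\alpha = G_\beta$, whereupon ``$G_\gamma = G_\delta$'' is a $G$-invariant equivalence relation with a non-singleton class, forcing it to be universal, so $G_\alpha$ fixes every point and $G$ is regular --- impossible for an infinite primitive group. (b) Every end $\epsilon$ of $\Gamma$ is of exactly one of two kinds: either some (equivalently every) ray in $\epsilon$ is eventually contained in a single lobe $\Lambda_0$, in which case, since an end cannot lie in two distinct lobes and each lobe has at most one end, $\Lambda_0$ has $\epsilon$ as its only end; or the half-line $[\alpha,\epsilon)_T$ from any vertex $\alpha$ meets infinitely many blue vertices of $T$, that is, $\epsilon$ corresponds to an end of $T$. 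Using (a), the tree structure of $T$, and the standard fact that a non-trivial normal subgroup of a primitive group acts transitively, one checks that no end of $\Gamma$ is fixed by $G$ once $\Gamma(m,\Lambda)$ exists, so case (i) does not arise there; the same style of argument will supply the cardinalities below.

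In the first kind of end we obtain (ii). Since $G_{\{\Lambda_0\}}$ restricts to a group of automorphisms of $\Lambda_0$ permuting its (unique) end, $G_{\{\Lambda_0\}} \leq G_\epsilon$, and plainly $G_\epsilon \leq G_{\{\Lambda_0\}}$; hence $G_\epsilon = G_{\{\Lambda_0\}}$. Taking $\alpha \in V\Lambda_0$, Theorem~\ref{thm:G_amalgamated_free_product} gives $G \cong G_\alpha \ast_{G_{\alpha,\{\Lambda_0\}}} G_{\{\Lambda_0\}} = G_\alpha \ast_{G_{\alpha,\epsilon}} G_\epsilon$. For the index: the assignment of an end of this kind to the lobe containing it is an injective, $G$-equivariant map onto the $G$-orbit of $\Lambda_0$ inside the countable set of lobes of $\Gamma$, and that orbit is infinite --- otherwise the kernel of the action of $G$ on it would be a non-trivial normal subgroup fixing $\Lambda_0$ setwise, hence not transitive on $V\Gamma$, contradicting primitivity. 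So $|\epsilon^G| = \aleph_0$.

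In the second kind of end we must land in (iii). Such an end is thin (two rays tending to an end of $T$ eventually share all of the cut-vertices of $\Gamma$ lying on the corresponding half-line, so are never disjoint), so Lemma~\ref{lemma:thin_end_in_small_Ga_orbit_for_all_a} (which itself runs through Theorem~\ref{thm:evans_oligomorphic} and Lemmas~\ref{lemma:Ga_leq_closure Gepsilon_then_Ga_leq_Gepsilon} and~\ref{lemma:if_Ga_leq_Gepsilon}) applies: if $|\epsilon^G| < 2^{\aleph_0}$ then $\epsilon^{G_\alpha}$ is finite for every vertex $\alpha$, whence $G_{\alpha,\epsilon}$ has finite index in $G_\alpha$. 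But $G_{\alpha,\epsilon}$ fixes pointwise the half-line $[\alpha,\epsilon)_T$, which is infinite; so a finite-index subgroup of $G_\alpha$ fixes infinitely many vertices --- impossible, as explained below, so $|\epsilon^G| = 2^{\aleph_0}$. To get the stated decomposition, let $\Lambda$ be the first lobe met by $[\alpha,\epsilon)_T$ (so $\alpha \in V\Lambda$), let $\beta$ be the next cut-vertex of $\Gamma$ on that half-line (so $\beta \in V\Lambda \setminus \{\alpha\}$), and set $H := G_{\{\Lambda\}}$. Theorem~\ref{thm:G_amalgamated_free_product} gives $G \cong G_\alpha \ast_{H_\alpha} H$ with $H_\alpha = G_{\alpha,\{\Lambda\}}$. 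Now $G_{\alpha,\epsilon}$ fixes the half-line, hence fixes $\Lambda$ and $\beta$, so $G_{\alpha,\epsilon} \leq H$; by Theorem~\ref{thm:G_amalgamated_free_product}, $H_\alpha$, and hence $H$, moves $\beta$, so $G_{\alpha,\epsilon} < H$; and $H = G_{\{\Lambda\}} < G$ since $G$ is transitive on the infinitely many lobes. Thus $G_{\alpha,\epsilon} < H < G$, and case (iii) holds. Combining the three kinds, every end falls into exactly one of (i)--(iii) with the asserted cardinality, so in particular $|\epsilon^G| \in \{1,\aleph_0,2^{\aleph_0}\}$ and no finite value larger than $1$ occurs.

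The step I expect to be the main obstacle is the impossibility claim in the previous paragraph: that the pointwise stabiliser $G_{(\Phi)}$ of a finite set $\Phi$ of vertices --- equivalently, a finite-index subgroup of a point stabiliser --- cannot fix infinitely many vertices, in particular cannot fix an infinite half-line of $T$. Fact (a) handles the case $|\Phi| = 1$ (and the case $G_{\alpha,\epsilon} = G_\alpha$ outright). For the general case the idea is to exploit that $G_{(\Phi)}$, fixing such a half-line, fixes each of the cut-vertices $\alpha = v_0, v_1, v_2, \dots$ on it and each intervening lobe; using the maximality of $G_{v_i,\{\Lambda\}}$ in $G_{\{\Lambda\}}$ from Theorem~\ref{thm:G_amalgamated_free_product} together with the imprimitivity criterion Theorem~\ref{thm:Gya_eq_Gyb_not_prim}, one propagates along the half-line to produce cut-vertices $v_i \neq v_j$ and a vertex $x$ of $T$ with $x \in (v_i, v_j)_T$ and $G_{v_i,x} = G_{v_j,x}$, contradicting primitivity of $G$ on $V\Gamma$.
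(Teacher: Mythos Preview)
Your overall architecture matches the paper's: reduce to the canonical orbital digraph $\Gamma(m,\Lambda)$, split ends into thick (lobe) ends and thin (tree) ends, handle the thick ends via Theorem~\ref{thm:G_amalgamated_free_product}, and for thin ends show the orbit must have size $2^{\aleph_0}$. Your treatment of case~(i), the thick-end case~(ii), and the decomposition in case~(iii) are all fine and close to the paper's.

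The genuine gap is exactly where you flagged it. You want to deduce, from $[G_\alpha:G_{\alpha,\epsilon}]<\infty$ and the fact that $G_{\alpha,\epsilon}$ fixes every cut-vertex $v_0,v_1,v_2,\dots$ on $[\alpha,\epsilon)_T$, that some $G_{v_i,x}=G_{v_j,x}$ with $x\in(v_i,v_j)_T$, so that Theorem~\ref{thm:Gya_eq_Gyb_not_prim} applies. Pigeonhole on the finitely many subgroups between $G_{\alpha,\epsilon}$ and $G_\alpha$ does give $G_{\alpha,v_i}=G_{\alpha,v_j}$ for some $i<j$, and hence $G_{\alpha,v_i}\leq G_{v_j,v_i}$; but the \emph{reverse} inclusion $G_{v_j,v_i}\leq G_\alpha$ is the whole difficulty, and neither the maximality of $G_{v_i,\{\Lambda\}}$ in $G_{\{\Lambda\}}$ nor the tree structure supplies it. Fixing $v_i$ and $v_j$ pins down the geodesic $[v_i,v_j]_T$, which lies entirely on the $\epsilon$-side of $\alpha$; nothing forces such an element to fix $\alpha$.

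The paper closes this gap with an ingredient you have not invoked: Corollary~\ref{cor:paired_suborbits_have_same_size} (a consequence of Praeger's Theorem~\ref{thm:praeger}), which says paired suborbits have equal size. The argument is a descent on $m:=|\epsilon^{G_\alpha}|$. One chooses $\alpha'$ far enough along $[\alpha,\epsilon)_T$ that $G_{\alpha,\alpha'}=G_{\alpha,\epsilon}$, then $\beta,\beta'$ further along with $G_{\beta,\beta'}\leq G_\epsilon$. Corollary~\ref{cor:paired_suborbits_have_same_size} turns $|\beta'^{\,G_\alpha}|=m$ into $|\alpha^{G_{\beta'}}|=m$; factoring $m=|G_{\beta'}:G_{\beta,\beta'}|\cdot|G_{\beta,\beta'}:G_{\beta',\alpha}|$ forces either $G_{\beta,\beta'}\leq G_\alpha$ (and then one \emph{does} get the two-sided equality needed for Theorem~\ref{thm:Gya_eq_Gyb_not_prim}, contradicting primitivity), or $|\epsilon^{G_\beta}|=|\beta'^{\,G_\beta}|<m$. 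Iterating drives $m$ down to $1$, which is again impossible since $G_{\alpha,\{\Lambda\}}$ fixes no vertex of $\Lambda$ other than $\alpha$. It is precisely the paired-suborbit equality that lets you reverse direction along the ray; without it your propagation stalls.
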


\begin{proof} If an orbital digraph of $G$ has precisely one end, then all non-diagonal orbital digraphs of $G$ have just one end and (i) holds. So, suppose $G$ has an orbital digraph with more than one end. By Theorem~\ref{thm:G_amalgamated_free_product}, $G$ has a canonical orbital graph $\Gamma = \Gamma(m, \Lambda)$, and we may write
$G \cong G_{\alpha} \ast_{G_{\alpha, \{\Lambda\}}} G_{\{\Lambda\}}$. Furthermore, since the ends of any two connected orbital digraphs of $G$ are the same, we may assume $\epsilon$ is an end of $\Gamma$.

If $\epsilon$ is a thick end of $\Gamma$, then $\epsilon$ must be the end of some lobe $\Lambda'$ of $\Gamma$. Since $G$ permutes the lobes of $\Gamma$ transitively (because $G$ acts transitively on the arc set of $\Gamma$), the subgroups $G_{\{\Lambda\}}$ and $G_{\{\Lambda'\}}$ are isomorphic; every lobe contains precisely one end (which is thick); and $G_\epsilon \cong G_{\{\Lambda\}}$. There are countably many lobes in $\Gamma$, so $|\epsilon^G| = \aleph_0$ and (ii) holds.

Finally, suppose $\epsilon$ is a thin end of $\Gamma$, and $|\epsilon^G| < 2^{\aleph_0}$. Let $T$ be the block-cut-vertex tree of $\Gamma$. Since $\epsilon$ is a thin end of $\Gamma$, it is an end of $T$. By Lemma~\ref{lemma:thin_end_in_small_Ga_orbit_for_all_a}, if $|\epsilon^G| < 2^{\aleph_0}$ then for all $\alpha \in V\Gamma$, the orbit $\epsilon^{G_{\alpha}}$ is finite. Fix $\alpha \in V\Gamma$. Since $\epsilon^{G_{\alpha}}$ is finite, there exists a vertex $\alpha' \in V\Gamma$ that lies in $[\alpha, \epsilon)_T$ such that $G_{\alpha, \alpha'} \leq G_\epsilon$. Put $m:=|\alpha^{\prime \ G_\alpha}|$, and note $m = |\epsilon^{G_\alpha}|$. By Corollary~\ref{cor:paired_suborbits_have_same_size}, $|\alpha^{G_{\alpha'}}| = m$.

Now choose $\beta \in (\alpha', \epsilon)_T$. A similar argument shows there exists $\beta' \in (\beta, \epsilon)_T$ such that $G_{\beta, \beta'} \leq G_\epsilon$.

By choosing $\beta$ and $\beta'$ in this way, we ensure that $G_{\alpha, \beta'} \leq G_{\beta, \beta'}$ and $|\beta^{\prime G_\alpha}| = |\epsilon^{G_\alpha}| = m$. We again apply Corollary~\ref{cor:paired_suborbits_have_same_size} to deduce $|\alpha^{G_{\beta'}}| = m$. Hence
\begin{align*} m    =& \ |\alpha^{G_{\beta'}}|\\
                    =& \ |G_{\beta'}:G_{\alpha, \beta'}|\\
                    =& \ |G_{\beta'}:G_{\beta, \beta'}| |G_{\beta, \beta'}:G_{\beta' \alpha}|. \end{align*}

If $|G_{\beta, \beta'}:G_{\beta' \alpha}| = 1$ then $G_{\beta, \beta'} \leq G_\alpha$ and so $G_{\alpha, \alpha'} \leq G_{\beta, \beta'}$ and $G_{\beta, \beta'} \leq G_{\alpha, \alpha'}$, with $\alpha', \beta \in (\alpha, \beta')_T$ and $[\alpha, \alpha']_T \cap (\beta, \beta']_T = \emptyset$. Applying Theorem~\ref{thm:Gya_eq_Gyb_not_prim} we see $G$ is not primitive, which contradicts our original assumption; thus, we must have
$|G_{\beta, \beta'}:G_{\beta' \alpha}| > 1$. This implies $|G_{\beta'}:G_{\beta, \beta'}| < m$; that is, $|\beta^{G_{\beta'}}| < m$. However, $|\beta^{G_{\beta'}}| = |\beta^{\prime G_{\beta}}|$ by Corollary~\ref{cor:paired_suborbits_have_same_size}; furthermore, $\beta$ and $\beta'$ we chosen so that $G_{\beta, \beta'} \leq G_\epsilon$. Hence $|\epsilon^{G_\beta}| < m$.

Since $\alpha$ was chosen arbitrarily, we may now set $\alpha:=\beta$ and repeat the above argument. Eventually, we find a vertex $\alpha$ such that $|\epsilon^{G_\alpha}| = 1$. However, this is also a contradiction. Indeed, let $x$ be the vertex of $T$ that is adjacent to $\alpha$ in $[\alpha, \epsilon)_T$. This vertex corresponds to some lobe $\Lambda$ of $\Gamma$ that contains $\alpha$. By Theorem~\ref{thm:G_amalgamated_free_product}, the group $G_{\alpha, \{\Lambda\}}$ fixes no vertex in $V\Lambda \setminus \{\alpha\}$, and therefore cannot fix the half-line $[\alpha, \epsilon)_T$. Whence, we must have $|\epsilon^{G}| = 2^{\aleph_0}$.

Furthermore, since $x$ is the vertex of $T$ adjacent to $\alpha$ in $[\alpha, \epsilon)_T$,
\[G_{\alpha, \epsilon} \leq G_x.\]
The vertex $x$ corresponds to some lobe of $\Gamma$, and so $G_x \cong G_{\{\Lambda\}}$. Finally, we note that $G_{\alpha, \epsilon} \not = G_x$ because $|\epsilon^{G}| = 2^{\aleph_0}$. Taking $H = G_x$ we have $G \cong G_{\alpha} \ast_{H_\alpha} H$, with $G_{\alpha, \epsilon} < H < G$ by Theorem~\ref{thm:G_amalgamated_free_product}.
\end{proof}

As the following three examples show, this theorem cannot be improved upon. Indeed, the result fails if one does not require $G$ to be closed; it also fails if one does not require $G$ to be primitive. Furthermore, there are infinite imprimitive subdegree finite closed permutation groups that satisfy the conclusions of the theorem.

\begin{example}
Given any group $G$ of permutations of a countable set $\Omega$, one can find a subgroup $H$ of $G$ that is countable with the same orbits on tuples of $\Omega$ as $G$, using a method described in \cite{cameron:oligomorphic}.

Since $\Omega$ is countable, the set of all tuples of $\Omega$ is a countable union of countable sets, and is therefore countable. Enumerate pairs $(\underline{\alpha_n}, \underline{\beta_n})$ of tuples for which $\underline{\alpha_n}$ and $\underline{\beta_n}$ lie in the same orbit of $G$. For each natural number $n$ choose $g_n \in G$ such that $\underline{\alpha_n}^{g_n} = \underline{\beta_n}$, and let $H$ be the subgroup of $G$ generated by the elements $g_n$. This group is countable, and has the same orbits on tuples as $G$.

If $G$ is taken to be a closed arc-transitive primitive group of automorphisms of a locally finite orbital digraph $\Gamma$ with more than one end, and $H$ is constructed as described, then the orbit of $G$ on any thin end $\epsilon$ of $\Gamma$ with have length $2^{\aleph_0}$ by Theorem~\ref{thm:action_on_ends}, while the length of the orbit of $H$ on $\epsilon$ will be at most $\aleph_0$. Furthermore, because $G$ and $H$ have the same orbits on tuples, their orbital digraphs must be the same. In particular, all non-diagonal orbital digraphs of $H$ must be connected, and so $H$ is primitive by D. G. Higman's famous result (\cite{higman}). The group $H$ fails to satisfy the conclusions of Theorem~\ref{thm:action_on_ends} because $H$ is not closed.
\end{example}

\begin{example}
Let $S_2$ be the symmetric group on two letters, and let $S_3$ be the symmetric group on three letters, and let $G = S_2 \ast S_3$ be the free product of $S_2$ and $S_3$. This group acts without inversion on a bivalent tree $T$ in which one part of the natural bipartition of $T$ has valency two and the other has valency three, and $G$ acts transitively on each part of the bipartition. Colour the vertices with valency two violet, and the vertices with valency three blue. Let $V$ be the set of violet vertices of $T$, and let $\Gamma$ be the orbital digraph $(V, (\alpha, \beta)^G)$, where $\alpha, \beta \in V$ are at distance two in $T$.

If $x \in T$ is the vertex lying between $\alpha$ and $\beta$, then $G_x$ acts like $S_3$ on the vertices in $T$ adjacent to $x$. Whence, $\Gamma$ has connectivity one, and the lobes of $\Gamma$ are isomorphic to $K_3$, the complete digraph on three vertices, with each vertex in $\Gamma$ lying in precisely two lobes. Thus, $\Gamma = \Gamma(2, K_3)$, with block-cut-vertex tree $T$.

Now $\Gamma(2, K_3)$ is an orbital digraph of $G$ acting on $V$. The stabiliser $G_\alpha$ is isomorphic to $S_3$ and is therefore finite. Whence $G_\alpha$ is closed in the permutation topology, so we now replace $G$ with its closure and assume henceforth that $G$ is closed.

Again since $G_\alpha$ is finite, by Theorem~\ref{thm:Gya_eq_Gyb_not_prim}, the group $G$ does not act primitively on $\Gamma(2, K_3)$, although the full automorphism group of $\Gamma(2, K_3)$ is primitive by Theorem~\ref{thm:directed_connectivity_one_digraphs}. If $\epsilon$ is any thin end of $\Gamma$, then the end orbit $\epsilon^G$ has at most countably infinite length.

The closed group $G$ fails to satisfy the conclusions of Theorem~\ref{thm:action_on_ends} because $G$ is not primitive.
\end{example}

\begin{example}
Let $C_5$ denote a directed cycle on  five vertices, and let $\Gamma$ be the digraph $\Gamma(3, C_5)$. The group $\aut \Gamma$ acts imprimitively on $V\Gamma$ by Theorem~\ref{thm:directed_connectivity_one_digraphs}, but satisfies the conclusions of Theorem~\ref{thm:action_on_ends} (in fact the group $\aut \Gamma$ is of type (iii)).
\end{example}

\section{rough ends}
\label{section:rough_ends}

In this section we relate Theorem~\ref{thm:action_on_ends} to the exciting work of B. Kr{\"o}n and R. G. M{\"o}ller (\cite{kron_moller_analogues_of_Cayley_graphs}).

Recall that any permutation group $G$ of a countable set can be considered a topological group by imposing upon it the permutation topology. A basis for the open sets of this topology are the cosets of pointwise stabilisers of finite sets. It is a simple exercise to check that if $G$ is closed then the pointwise stabiliser of a single element in $\Omega$ is also closed.

If $G$ is a topological group, then a connected graph $\Gamma$ is said to be a {\em rough Cayley graph} of $G$ if $G$ acts transitively on $\Gamma$ and the stabilisers of vertices are compact open subgroups of $G$.

If $G$ has a rough Cayley graph then the ends of any two rough Cayley graphs are homeomorphic and can therefore be considered to be the same (\cite[Theorem 2.7 and Section 3.1.1 ]{kron_moller_analogues_of_Cayley_graphs}). The {\em rough ends} of $G$ are defined to be the ends of a rough Cayley graph of $G$ (\cite[Definition 3.12]{kron_moller_analogues_of_Cayley_graphs}).

\begin{thm} {\normalfont(\cite[Corollary 1]{moller:FC})}
If $G$ acts as a group of automorphisms of a locally finite connected graph $\Gamma$ such that $G$ is transitive on the vertex set of $\Gamma$ and the stabilisers of the vertices in $\Gamma$ are compact and open, then $G$ is compactly generated.
\end{thm}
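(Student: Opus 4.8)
The plan is to exhibit an explicit compact generating set. Fix a vertex $v\in V\Gamma$ and write $K:=G_v$ for its stabiliser, which by hypothesis is a compact (and open) subgroup of $G$. Since $\Gamma$ is locally finite, $v$ has only finitely many neighbours, say $w_1,\dots,w_d$; and since $G$ is vertex-transitive, for each $i$ I may choose $g_i\in G$ with $v^{g_i}=w_i$. I claim that $S:=K\cup\{g_1,\dots,g_d\}$ generates $G$. As $S$ is the union of the compact set $K$ with a finite set, it is compact, so the claim immediately yields that $G$ is compactly generated.

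To prove the claim, set $H:=\langle S\rangle\leq G$ and first show that $H$ acts transitively on $V\Gamma$. The key observation is that the orbit $v^H$ is closed under passing to neighbours: if $u=v^h\in v^H$ with $h\in H$, then, because automorphisms preserve adjacency, the neighbours of $u$ are exactly $w_1^h,\dots,w_d^h$, and $w_i^h=(v^{g_i})^h=v^{g_ih}\in v^H$ since $g_ih\in H$. As $\Gamma$ is connected, it follows that $v^H=V\Gamma$, so $H$ is transitive. A standard Frattini-type argument then finishes things off: given $g\in G$, transitivity of $H$ supplies $h\in H$ with $v^g=v^h$, whence $gh^{-1}\in G_v=K\leq H$ and so $g\in H$. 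Thus $G=H=\langle S\rangle$ with $S$ compact, as required.

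I do not expect a genuine obstacle here; the argument is simply the topological-group analogue of the elementary fact that a group acting vertex-transitively on a connected locally finite graph with \emph{finite} vertex stabilisers is finitely generated, with ``finite'' replaced throughout by ``compact''. The only point that warrants a moment's care is checking that $S$ genuinely is a compact \emph{generating} set in the sense required of a topological group, and this is immediate once one notes that compactness is preserved under finite unions and that $K$ is compact by hypothesis; openness of $K$, which guarantees that $G$ is locally compact, is not actually needed for this particular deduction.
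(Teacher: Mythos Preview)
Your argument is correct and is the standard proof of this fact. Note, however, that the paper does not supply its own proof of this theorem: it is quoted verbatim as \cite[Corollary 1]{moller:FC} and left unproved, serving only as background for the discussion of rough Cayley graphs. So there is nothing in the paper to compare your proof against; your write-up simply fills in what the author chose to cite rather than reprove.

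For completeness, the argument you give is essentially the one M\"oller has in mind: pick a vertex, take its compact stabiliser together with finitely many elements moving it to each neighbour, and use connectedness plus a Frattini argument to see this set generates. Your closing remark is also accurate: openness of $K$ is used elsewhere (to ensure $G$ is locally compact and totally disconnected, and to make $\Gamma$ a rough Cayley graph), but it plays no role in establishing compact generation itself.
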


\begin{lemma}  {\normalfont(\cite[Lemma 2]{{woess:topological_groups_and_infinite_graphs}})}
If $\Gamma$ is a vertex transitive infinite connected locally finite graph then the closure (in the permutation topology) of any subset $U \subseteq \aut(\Gamma)$ is compact if and only if the orbit $\alpha^U$ is finite for all $\alpha \in V\Gamma$.
\end{lemma}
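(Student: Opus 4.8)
The plan is to derive both implications from Tychonoff's theorem together with the elementary fact that a continuous map from a compact space into the discrete space $V\Gamma$ has finite image; throughout, $\overline U$ denotes the closure of $U$ in $\sym(V\Gamma)$, which coincides with its closure in $\aut(\Gamma)$ since $\aut(\Gamma)$ is closed in $\sym(V\Gamma)$. For the forward implication, assume $\overline U$ is compact and fix $\alpha\in V\Gamma$. The sets $\{g\in\overline U : \alpha^g=\beta\}$, $\beta\in V\Gamma$, are clopen in $\overline U$ and partition it, so by compactness only finitely many are non-empty; hence $\alpha^{\overline U}$ is finite, and so is $\alpha^U\subseteq\alpha^{\overline U}$.

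For the converse, assume $\alpha^U$ is finite for every $\alpha\in V\Gamma$. The first step is to bound the inverse orbits $\alpha^{U^{-1}}$. Fix $\alpha_0\in V\Gamma$ and put $R:=\max_{f\in\alpha_0^U} d(\alpha_0,f)$, which is finite because $\alpha_0^U$ is finite and $\Gamma$ is connected. Since every $g\in U$ preserves distances, for any vertex $\beta$ we have
\[
d(\beta^{g^{-1}},\alpha_0)\le d(\beta^{g^{-1}},\alpha_0^{g^{-1}})+d(\alpha_0^{g^{-1}},\alpha_0)=d(\beta,\alpha_0)+d(\alpha_0,\alpha_0^{g})\le d(\beta,\alpha_0)+R,
\]
and as $\Gamma$ is locally finite, the ball of radius $d(\beta,\alpha_0)+R$ about $\alpha_0$ is finite; since it contains $\beta^{U^{-1}}$, the orbit $\beta^{U^{-1}}$ is finite for every vertex $\beta$.

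For the second step set
\[
C:=\{\,g\in\sym(V\Gamma) : \alpha^g\in\alpha^U \ \text{and}\ \alpha^{g^{-1}}\in\alpha^{U^{-1}}\ \text{for all}\ \alpha\in V\Gamma\,\}.
\]
Each of its defining conditions is a finite union of basic clopen sets, because the relevant orbits are finite, so $C$ is closed and visibly contains $\overline U$. The canonical embedding $\iota\colon\sym(V\Gamma)\to (V\Gamma)^{V\Gamma}\times(V\Gamma)^{V\Gamma}$, $g\mapsto\bigl((\alpha^g)_\alpha,(\alpha^{g^{-1}})_\alpha\bigr)$, is a homeomorphism onto its image by the very definition of the permutation topology, and $\iota(C)$ lies inside $\prod_{\alpha}\alpha^U\times\prod_{\alpha}\alpha^{U^{-1}}$, a product of finite discrete spaces, hence compact by Tychonoff; within that product $\iota(C)$ is cut out by the closed conditions asserting that the two coordinate functions are mutually inverse, so $\iota(C)$, and therefore $C$, is compact. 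Being a closed subset of $C$, the set $\overline U$ is compact, which completes the proof. I expect the one genuine subtlety to be keeping track of the inverses: the permutation topology is induced by $g\mapsto(g,g^{-1})$, not by $g\mapsto g$, so the finiteness of the orbits $\alpha^{U^{-1}}$ obtained in the first step is exactly what is needed to trap $\overline U$ inside a compact set of the above form; note that vertex-transitivity is not used anywhere.
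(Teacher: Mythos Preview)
The paper does not supply its own proof of this lemma; it is quoted from Woess \cite{woess:topological_groups_and_infinite_graphs} as a black box. Your argument is correct and self-contained, and is in fact the standard one: the forward direction is just that the continuous evaluation map $g\mapsto\alpha^g$ from a compact space to a discrete one has finite image, and for the converse you correctly identify the one non-trivial point, namely that finiteness of the orbits $\alpha^U$ forces finiteness of the inverse orbits $\alpha^{U^{-1}}$ via the isometry estimate, after which Tychonoff does the rest. Your closing observation is also right: vertex-transitivity plays no role, only connectedness (so the graph metric is everywhere finite) and local finiteness (so metric balls are finite) are used.
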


Thus, if $G$ is a closed primitive group of permutations of a countable set, and the subdegrees of $G$ are all finite, then $G$ acts transitively on a non-diagonal orbital digraph $\Gamma$, which is necessarily connected, and the stabilisers of vertices in $\Gamma$ are open and compact. Whence $\Gamma$ is a rough Cayley graph of $G$, and the rough ends of $G$ are the ends of $\Gamma$.

We conclude by restating our main result as a theorem about a primitive group and its rough ends.

\begin{thm} \label{thm:action_on_ends}  Let $G$ be a subdegree finite primitive group of permutations of an infinite set $\Omega$ that is closed in the natural complete topology of $\sym(\Omega)$. If $\epsilon$ is a rough end of $G$ then precisely one of the following holds.
\begin{enumerate}
\item $|\epsilon^G| = 1$ and $G = G_\epsilon$;
\item $|\epsilon^G| = \aleph_0$ and $G \cong G_{\alpha} \ast_{G_{\alpha, \epsilon}} G_\epsilon$;
\item $|\epsilon^G| = 2^{\aleph_0}$ and $G \cong G_{\alpha} \ast_{H_\alpha} H$, for some group $H$ satisfying $G_{\alpha, \epsilon} < H \leq G$.
\end{enumerate} \end{thm}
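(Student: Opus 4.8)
The plan is to deduce this restatement directly from our main result in Section~\ref{section:orbits_on_pends}, once the rough ends of $G$ have been identified with the ends of a non-diagonal orbital digraph. First I would fix a non-diagonal orbital $\Delta = (\alpha, \beta)^G$ of $G$ and let $\Gamma = (\Omega, \Delta)$ be the corresponding orbital digraph. Since $G$ is primitive, $\Gamma$ is connected; since $G$ is subdegree finite, $\Gamma$ is locally finite; and $G$ acts transitively on $V\Gamma = \Omega$. It remains to check that each vertex stabiliser $G_\alpha$ is a compact open subgroup of $G$. Openness is immediate, since $G_\alpha$ is a basic open set in the permutation topology. For compactness, note that $G_\alpha$ is closed in $\sym(\Omega)$ — it is the stabiliser of a point in the closed group $G$ — hence $G_\alpha$ coincides with its own closure in $\aut(\Gamma)$; the orbits of $G_\alpha$ on $V\Gamma$ are precisely the suborbits of $G$, which are finite by hypothesis, so by the preceding lemma of Woess the closure of $G_\alpha$ is compact. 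Thus $\Gamma$ is a rough Cayley graph of $G$.

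Next I would invoke the result of Kr\"on and M\"oller: the ends of any two rough Cayley graphs of $G$ are homeomorphic, so the rough ends of $G$ are well defined and coincide with the ends of the particular rough Cayley graph $\Gamma$. This identification is $G$-equivariant, so if $\epsilon$ is a rough end of $G$ we may regard $\epsilon$ as an end of the connected orbital digraph $\Gamma$ without changing $G_\epsilon$ or $|\epsilon^G|$.

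Finally I would apply our main result of Section~\ref{section:orbits_on_pends} to $\epsilon$, viewed as an end of $\Gamma$: exactly one of the alternatives (i), (ii), (iii) holds, and each translates verbatim into the corresponding statement for the rough end $\epsilon$. (In case (iii) the result of Section~\ref{section:orbits_on_pends} in fact delivers $G_{\alpha, \epsilon} < H < G$, which is slightly stronger than the $G_{\alpha, \epsilon} < H \le G$ recorded here.) The only step that needs genuine care is the verification that $\Gamma$ really is a rough Cayley graph, and within that the compactness of $G_\alpha$; everything else is an unwinding of definitions, and once compactness is established there is no further obstacle.
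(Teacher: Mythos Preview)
Your proposal is correct and follows exactly the approach the paper takes: the discussion in Section~\ref{section:rough_ends} preceding the theorem establishes that a non-diagonal orbital digraph is a rough Cayley graph (openness of $G_\alpha$ from the definition of the permutation topology, compactness via Woess's lemma together with subdegree finiteness and the closedness of $G_\alpha$), identifies the rough ends with the ends of this digraph via Kr\"on--M\"oller, and then the theorem is simply the main result of Section~\ref{section:orbits_on_pends} restated. Your observation that case~(iii) in Section~\ref{section:orbits_on_pends} actually gives the sharper $H < G$ is also accurate.
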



\\

{\em \noindent Acknowledgements.} Some of the results in this paper are taken from the author's DPhil thesis, completed
at the University of Oxford, under the supervision of Peter Neumann and funded by the EPSRC; the remainder was completed at Syracuse University as a Philip T Church Postdoctoral Fellow. The author would
like to thank Dr Neumann and the EPSRC.


\end{document}